\let\origsection=\section \def\section{\@ifstar{\origsection*}{\mysection}}
\def\mysection{\@startsection{section}{1}\z@{.7\linespacing\@plus\linespacing}{.5\linespacing}{\normalfont\scshape\centering\S}}
\renewcommand{\PrintDOI}[1]{\doi{#1}}
\numberwithin{equation}{section}
\let\polishlcross=\l
\def\l{\ifmmode\ell\else\polishlcross\fi}
\newcommand{\calF}{\mathcal{F}}
\newcommand{\calP}{\mathcal{P}}
\newcommand{\Oh}{\mathrm{O}}
\newtheorem{theorem}{Theorem}
\newtheorem{conjecture}[theorem]{Conjecture}
\newtheorem{fact}[theorem]{Fact}
\newtheorem{lemma}[theorem]{Lemma}
\newtheorem{proposition}[theorem]{Proposition}
\newtheorem{question}[theorem]{Question}
\newtheorem{corollary}[theorem]{Corollary}
\let\eps=\varepsilon
\let\theta=\vartheta
\let\phi=\varphi
\def\NN{\mathds N}
\newcommand{\ceil}[1]{{\left\lceil #1 \right\rceil}}
\begin{document}

\title{Packing large balanced trees into bipartite graphs}

\author[C. G. Fernandes]{Cristina G. Fernandes}
\address{Instituto de Matem\'atica e Estat\'{\i}stica, Universidade de S\~ao
Paulo, S\~ao Paulo, Brazil}
\email{cris@ime.usp.br}

\author[T. Naia]{Tássio Naia}
\address{Centre de Recerca Matem\`atica, Belaterra, Spain}
\email{tnaia@member.fsf.org}

\author[G. Santos]{Giovanne Santos}
\address{Universidad de Chile, Chile}
\email{gsantos@dim.uchile.cl}

\author[M. Stein]{Maya Stein}
\address{Universidad de Chile, Chile}
\email{mstein@dim.uchile.cl}

\thanks{
  This research was initiated at the first
``ChiPaGra: Workshop Chileno Paulista em/en Grafos'', and was partially supported by
  the research project  2019/13364-7: ``Extremal and Structural Problems in Graph Theory'',
  funded by FAPESP (The São Paulo Research Foundation)
  and  ANID (Agencia Nacional de Investigación y
  Desarrollo). C.~G. Fernandes acknowledges partial
  support by CNPq (Proc.~423833/2018-9 and~310979/2020-0). G. Santos
  was supported by ANID/Doctorado Nacional/21221049.
  M. Stein was supported by ANID Regular Grant 1221905 and by ANID grant CMM Basal FB210005.
  T.~Naia was supported by the Grant PID2020-113082GB-I00
  funded by MICIU/AEI/10.13039/501100011033.
  This work is supported by the Spanish State Research Agency,
  through the Severo Ochoa and María de Maeztu Program for Centers
  and Units of Excellence in R\&D (CEX2020-001084-M)
}

\begin{abstract}
We prove that for every~${\gamma > 0}$
  there exists~$n_0 \in \NN$ such that for every~${n \geq n_0}$
  any family of up to $\lfloor{n^{\frac12+\gamma}}\rfloor$ trees having at most
  $(1-\gamma)n$ vertices in each bipartition class can be packed into~$K_{n,n}$.
  As a tool for our proof, we show an approximate bipartite version of the
  Koml\'os--S\'ark\"ozy--Szemer\'edi Theorem, which we believe to be of independent interest.
\end{abstract}

\keywords{}

\maketitle

\section{Introduction}

A family $\mathcal H$ of graphs 
\emph{packs}
into a graph $G$
if $G$ contains pairwise edge-disjoint copies of the members of $\mathcal H$,
i.e.~a {\it packing} of $\mathcal H$. A packing is  \emph{perfect} if it uses  each edge of $G$.
A graph $H$ \emph{decomposes} a graph $G$ if there is
a perfect packing of copies of~$H$ into $G$.

Packing and decomposition
problems have a long history, naturally relating to
Euler's 1782 question  on the existence of orthogonal latin squares,
and to the existence of
designs. 
We are interested in packing trees
into complete bipartite graphs.

\subsection{Ringel-type conjectures}

In 1963, Ringel~\cite{R63-theory_of_graphs} conjectured that a perfect packing of any tree into $K_m$ for an appropriate $m$ should always exist, where $K_m$ is the complete graph on $m$ vertices.

\begin{conjecture}[Ringel~\cite{R63-theory_of_graphs}]
  \label{conj:ringel}
 Every tree
of order $n+1$ decomposes~$K_{2n+1}$.
\end{conjecture}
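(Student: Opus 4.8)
The plan is to prove the statement for all sufficiently large~$n$, which is where the real difficulty lies; I will not discuss the small values of~$n$. So fix a tree~$T$ of order~$n+1$ with~$n$ large. Since $K_{2n+1}$ has exactly $\binom{2n+1}{2}=n(2n+1)$ edges and $T$ has~$n$ edges, decomposing $K_{2n+1}$ by~$T$ amounts to finding an edge-disjoint packing of precisely $2n+1$ copies of~$T$ that \emph{uses every edge}. It is this exactness---no edge may be wasted---that makes the problem hard, since greedy or random embeddings naturally leave a small, uncontrolled remainder.

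First I would exploit the symmetry of $K_{2n+1}$. Identify $V(K_{2n+1})$ with $\ZZ_{2n+1}$, so that its edge set splits into the~$n$ difference classes $\{\{i,i+d\}:i\in\ZZ_{2n+1}\}$, $d=1,\dots,n$, each a single orbit of size $2n+1$ under the natural action of $\ZZ_{2n+1}$. If~$T$ admitted a $\rho$-labeling---an injection $f\colon V(T)\to\ZZ_{2n+1}$ whose~$n$ edge-differences meet each difference class exactly once---then the $2n+1$ cyclic rotates of this one copy would decompose $K_{2n+1}$ outright. That is Rosa's (still open) strengthening of the conjecture, so instead I would settle for a decomposition that is only \emph{approximately cyclic}: most of the $2n+1$ copies are near-rotates of a well-chosen base copy, and a bounded error term is corrected by absorption.

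The core of the argument is therefore an absorption scheme, organized around the standard structural dichotomy for trees: either~$T$ has linearly many leaves, or~$T$ contains linearly many pairwise vertex-disjoint \emph{bare paths} (paths whose internal vertices have degree~$2$ in~$T$). In either regime one peels off a small flexible subforest~$T'$ with $|V(T')|=\oh(n)$ such that $T-V(T')$ (after suitable completion) is a tree that embeds almost-cyclically and essentially perfectly, while the attachment of~$T'$ can be rerouted along many alternatives. The steps would be: (i) reserve inside $K_{2n+1}$ a sparse quasirandom \emph{absorbing gadget} able to swallow any configuration from a prescribed family; (ii) edge-disjointly embed the $2n+1$ large pieces so as to cover all of $K_{2n+1}$ except the gadget and a negligible remainder, analyzing a semi-random (nibble-type) process via the pseudorandomness of $K_{2n+1}$ to keep both the uncovered edges and the residual degrees evenly spread; (iii) extend each large piece to a copy of~$T$ within the leftover together with the gadget, using the flexibility of~$T'$ and a Hall-type matching to assign leftover edges to the slots reserved in~(i).

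The main obstacle is step~(iii): the absorbing gadget must be robust enough to complete \emph{every} possible leftover---which steps~(i) and~(ii) control only with high probability, not exactly---into a genuine decomposition, yet sparse enough that its reserved edges do not obstruct the bulk embedding in step~(ii). A second genuine difficulty is reconciling the tree-surgery in step~(ii) with the cyclic symmetry, so that the large pieces really can be taken to be near-rotates and the residual-distribution estimates stay tractable; the right choice of~$T'$, quite different in the leaf-heavy and path-heavy cases, is exactly what makes both requirements hold simultaneously.
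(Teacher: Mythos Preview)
The paper does not prove this statement. Conjecture~\ref{conj:ringel} is stated purely as historical background: it is Ringel's 1963 conjecture, and the paper immediately notes that it was resolved for large~$n$ by Montgomery, Pokrovskiy and Sudakov~\cite{MPS21-ringel_conjecture} (with a second proof by Keevash and Staden~\cite{KS20-ringel_conjecture}). There is therefore no ``paper's own proof'' to compare your proposal against; the paper's contributions are Theorems~\ref{thm:packing_trees} and~\ref{thm:emb_rooted_tree}, which concern packing balanced trees into~$K_{n,n}$, not decomposing~$K_{2n+1}$.

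Your sketch is, in broad strokes, a plausible outline of the actual approach in~\cite{MPS21-ringel_conjecture}: near-rainbow/near-cyclic embeddings together with an absorption step, split according to whether the tree has many leaves or many bare paths. But as written it is only a plan, not a proof: you do not construct the absorber, you do not specify or analyze the nibble, and you do not verify that the leftover after step~(ii) actually falls into the family your gadget can swallow. Each of these is where the genuine work lies in~\cite{MPS21-ringel_conjecture}, and none of it is addressed here beyond naming the difficulty. If your intent was to reproduce the proof of Ringel's conjecture, you would need to supply those arguments; if your intent was to prove something in the present paper, you have targeted the wrong statement.
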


Ringel's
conjecture was solved in 2021 for large~$n$ by Montgomery, Pokrovskiy, and
Sudakov~\cite{MPS21-ringel_conjecture}, with a different proof given by
Keevash and Staden~\cite{KS20-ringel_conjecture}.

The following generalization of  Conjecture~\ref{conj:ringel},
 suggested in 2016, has recently been confirmed
for large trees with maximum degree $\Oh(n/\log n)$ in~\cite{ABCHPT22-tree_packing}.

\begin{conjecture}[B\"{o}ttcher, Hladký, Piguet and Taraz~\cite{BHPT16-approximate_tpc}]
  \label{conj:ringel_any_family}
  Each family  of trees of individual
  orders at most $n+1$ and total number of edges at most $\binom{2n+1}{2}$ packs into~$K_{2n+1}$.
\end{conjecture}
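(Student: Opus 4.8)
Conjecture~\ref{conj:ringel_any_family} is open in full generality, so what follows is a roadmap rather than a proof; it follows the absorption philosophy behind the resolution of Ringel's conjecture \cite{MPS21-ringel_conjecture, KS20-ringel_conjecture} and its bounded-degree family version \cite{ABCHPT22-tree_packing}, using the approximate bipartite Koml\'os--S\'ark\"ozy--Szemer\'edi theorem of this paper as a building block. Fix a small constant~$\varepsilon>0$ and call a tree in the family \emph{large} if it has more than~$(1-\varepsilon)(n+1)$ vertices and \emph{small} otherwise. Since $K_{2n+1}$ has $\binom{2n+1}{2}\sim 2n^2$ edges and each large tree uses~$\sim n$ of them, there are only~$\Oh(n)$ large trees---the same order of magnitude as the $n$ spanning trees in Ringel's conjecture. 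The overall plan is: (i) set aside a sparse quasirandom \emph{absorbing} subgraph of~$K_{2n+1}$; (ii) pack all large trees and almost all small trees edge-disjointly into the remainder by a semi-random (nibble) process; (iii) use the absorbing subgraph to insert the few remaining small trees and to reroute a bounded number of already-placed edges so that every edge of~$K_{2n+1}$ is eventually used.

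For step~(ii), partition $V(K_{2n+1})=A\cup B$ with $|A|=n$ and $|B|=n+1$, so that $K_{2n+1}=K[A]\cup K[B]\cup K[A,B]$ with $K[A,B]\cong K_{n,n}$ up to one vertex; each of the bipartite part and the union of the two cliques carries about half of the $\binom{2n+1}{2}$ edges, so there is a comparable edge budget in both regimes. Each large tree~$T$ with bipartition $(X,Y)$ is embedded by distributing~$X$ and~$Y$ across~$A$ and~$B$, which automatically splits its edges among $K[A,B]$, $K[A]$ and $K[B]$; the near-balanced, near-spanning cross-components are placed using the approximate bipartite Koml\'os--S\'ark\"ozy--Szemer\'edi theorem (iterated, with the union of previously embedded trees kept quasirandom so that the theorem's degree hypotheses persist), while the intra-class components are placed into $K[A]$ and $K[B]$ by the corresponding non-bipartite embedding results. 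Trees with a vertex of very large degree are embedded first and separately, spreading the image of the high-degree vertex across both~$A$ and~$B$ and handling its pendant stars by hand.

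For step~(iii), one shows that after~(ii) the uncovered edges, together with the reserved absorbing subgraph, form an approximately regular quasirandom graph; the remaining small trees are then placed greedily, and a final absorption lemma---of the exhaustive type required in \cite{MPS21-ringel_conjecture}, since the hypothesis "total number of edges at most $\binom{2n+1}{2}$" leaves no slack when one aims for a perfect packing---completes the packing to a perfect one.

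The principal obstacle is dispensing with the maximum-degree bound of \cite{ABCHPT22-tree_packing}: a spanning tree with a vertex of degree close to~$2n$ forces almost all edges at some host vertex into a single tree, destroying the pseudorandomness on which both the nibble and the known absorbers rely. I expect the crux to be an honest, structure-exploiting treatment of the (few) trees of large degree---embedded into a carefully chosen skeleton of~$K_{2n+1}$ before the generic process starts---together with a proof that the generic process tolerates the perturbation they cause. A secondary difficulty is coordinating the bipartite packing in~$K[A,B]$ with the two clique packings so that all three subprocesses simultaneously retain the pseudorandom properties they need; strengthening the approximate bipartite Koml\'os--S\'ark\"ozy--Szemer\'edi theorem so that it packs linearly many near-spanning trees into a quasirandom bipartite host, rather than the~$\lfloor n^{1/2+\gamma}\rfloor$ somewhat-unbalanced trees handled here, is a natural prerequisite.
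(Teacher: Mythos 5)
This statement is a conjecture (due to B\"ottcher, Hladk\'y, Piguet and Taraz), not a result of the paper: the authors state it only as background and record that it has been confirmed for large trees of maximum degree $\Oh(n/\log n)$ in~\cite{ABCHPT22-tree_packing}, so there is no proof in the paper to compare yours against. You recognize this yourself, and what you submit is explicitly a roadmap rather than a proof; as such it cannot be accepted as a proof of the statement. The genuine gaps are exactly the ones you flag but do not close: (i) every known packing argument in this line (the nibble/absorption machinery of~\cite{MPS21-ringel_conjecture, KS20-ringel_conjecture, ABCHPT22-tree_packing}) requires a maximum-degree hypothesis, and removing it is not a technical perturbation --- a family may consist largely of near-spanning trees with vertices of degree close to $n$ (e.g.\ stars and double stars), for which the pseudorandomness maintenance underlying both the semi-random phase and the absorbers breaks down, and no substitute argument is offered; (ii) the hypothesis ``total number of edges at most $\binom{2n+1}{2}$'' can be tight, so one needs a perfect packing with zero slack, and the ``final absorption lemma'' you invoke for this is precisely the missing theorem, not an available tool; (iii) the proposed use of the present paper's Theorem~\ref{thm:emb_rooted_tree} and Theorem~\ref{thm:packing_trees} as building blocks overreaches, since those results embed a single almost-spanning balanced tree, respectively pack only about $n^{1/2}$ trees with a $\gamma n$ slack in each class, whereas your scheme needs to pack linearly many near-spanning trees into a quasirandom bipartite host while keeping the leftover quasirandom --- a strengthening you correctly label a prerequisite but do not supply. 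In short, the proposal is a reasonable survey of how one might attack the conjecture, but each of its three phases rests on unproven lemmas that constitute the actual difficulty of the problem.
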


Much earlier, in 1989, Graham and H\"{a}ggkvist~\cite{H89-decompositions_bipartite}
generalized Ringel's conjecture to regular graphs. They
conjectured that every tree of order~$n+1$ decomposes any~{$2n$-regular} graph (which would imply Conjecture~\ref{conj:ringel}), and that
 every tree of order~$n+1$ decomposes any $n$-regular bipartite graph.
  The latter conjecture would imply the following for complete bipartite hosts.

\begin{conjecture}[Graham and H\"{a}ggkvist~\cite{H89-decompositions_bipartite}]
  \label{conj:ringel_bipartite}
 Any tree of order $n+1$ decomposes the complete bipartite graph~$K_{n,n}$.
\end{conjecture}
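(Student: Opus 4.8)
\noindent\textit{Proof proposal.}\ As Conjecture~\ref{conj:ringel_bipartite} remains open, what follows is only the line of attack I would pursue, modelled on the recent proofs of Ringel's conjecture. Fix a tree $T$ of order $n+1$ with bipartition classes $A,B$, say $|A|=a\ge b=|B|$, and write the two sides of $K_{n,n}$ as $X$ and $Y$, each identified with $\ZZ_n$. A decomposition of $K_{n,n}$ into $n$ copies of $T$ is equivalent, by a standard rotation argument, to producing a labelling $f\colon V(T)\to\ZZ_n$, injective on each of $A$ and $B$, whose $n$ edge-differences $f(v)-f(u)$ --- taken over the edges $uv\in E(T)$ with $u\in A$ and $v\in B$ --- are pairwise distinct; the $n$ cyclic translates of the corresponding copy then partition $E(K_{n,n})$. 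The first step is therefore to recast the problem as finding such a ``$\ZZ_n$-graceful'' labelling, or --- more robustly --- as building a partial decomposition whose uncovered part stays highly pseudorandom.

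The plan is then an iterative absorption scheme in three phases. In the \emph{setup} phase I would reserve a sparse absorbing subgraph $A^\ast\subseteq K_{n,n}$, together with small reservoir sets on each side, engineered so that $A^\ast$ and any sufficiently small, appropriately balanced remainder graph can be decomposed into copies of $T$ in many ways. In the \emph{bulk} phase I would embed $(1-\oh(1))n$ copies of $T$ one at a time, maintaining the invariant that the uncovered graph stays nearly regular and nearly balanced between $X$ and $Y$; since the copies must eventually exhaust almost all of $E(K_{n,n})$, the later embeddings take place into a sparse pseudorandom bipartite graph, which is exactly the regime covered by the approximate bipartite Koml\'os--S\'ark\"ozy--Szemer\'edi theorem that this paper establishes as a tool. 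In the \emph{absorption} phase the remaining $\oh(n)$ copies are completed exactly, using the flexibility built into $A^\ast$ to absorb whatever small discrepancy has accumulated.

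The hard part will be twofold. First, divisibility and balance: at every step the bipartition sizes $(a,b)$ of $T$ and the exact vertex counts on the two sides of $K_{n,n}$ must be respected, so the bulk phase cannot be run blindly --- one must track a ``balance potential'' and correct it, and the absorber must soak up the surviving discrepancy exactly. Second, and more seriously, trees of very large maximum degree --- stars, spiders, double-stars and their relatives with $\Delta(T)=\Omega(n)$ --- are not embeddable by random or greedy methods, so for these one is forced back onto explicit algebraic labellings, which are known only in special cases. Unifying the pseudorandom/absorption machinery for ``expander-like'' trees with ad hoc constructions for high-degree trees, using a single absorber whose construction does not depend on $T$, is in my view the central obstacle, and is precisely the kind of difficulty that the present paper's restriction to $\floor{n^{\frac{1}{2}+\gamma}}$ trees, each with at most $(1-\gamma)n$ vertices per bipartition class, is designed to sidestep.
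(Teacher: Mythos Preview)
The paper does not prove Conjecture~\ref{conj:ringel_bipartite}; it is stated as an open problem, with only Matthes's approximate version~\cite{M23-large_rainbow} cited as progress. You correctly acknowledge this and offer a strategy rather than a proof, so there is no argument in the paper to compare against. Your diagnosis of the central obstacle --- trees with $\Delta(T)=\Omega(n)$ --- is also accurate and matches the difficulties the paper deliberately sidesteps.

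There is, however, a genuine mismatch in your plan. You write that in the late bulk phase ``the later embeddings take place into a sparse pseudorandom bipartite graph, which is exactly the regime covered by the approximate bipartite Koml\'os--S\'ark\"ozy--Szemer\'edi theorem that this paper establishes.'' This is not so: Theorem~\ref{thm:emb_rooted_tree} requires $\delta(G)\ge(\tfrac12+\gamma)n$, a \emph{dense} minimum-degree hypothesis, and says nothing about sparse pseudorandom hosts. Once $(1-o(1))n$ copies of $T$ have been packed, the leftover graph has density $o(1)$ and Theorem~\ref{thm:emb_rooted_tree} no longer applies; for that regime one needs rainbow-embedding or nibble-type tools as in~\cite{MPS21-ringel_conjecture,KS20-ringel_conjecture}, not the result proved here. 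Indeed, the paper itself only manages to use Theorem~\ref{thm:emb_rooted_tree} to pack about $\sqrt{n}$ trees (see Theorem~\ref{thm:packing_trees} and Lemma~\ref{lem:packing_forests}), precisely because each embedding may remove up to $\Delta(T)$ edges at every vertex and the minimum-degree hypothesis degrades after that many rounds.
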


Evidence for the conjectures
from~\cite{H89-decompositions_bipartite} was given in~\cite{H89-decompositions_bipartite, LL05-decompositions_knn, DL13-almost_every_tree}, and a generalization in~\cite{Wang09-packing}.
Recently, Matthes~\cite{M23-large_rainbow} showed an approximate version of Conjecture~\ref{conj:ringel_bipartite}
by
proving that, for every $\eps > 0$ and sufficiently large $n$, any tree
of order $n+1$ packs at least $n$ times into $K_{(1+\eps)n,(1+\eps)n}$.

We remark that although in~\cite{H89-decompositions_bipartite}  a variant of Conjecture~\ref{conj:ringel_any_family} for regular graphs is conjectured, the analogue for bipartite graphs is not true. More precisely,
 in Conjecture~\ref{conj:ringel_bipartite}, we cannot replace the decomposition into copies of a fixed tree with a decomposition into any given family of trees of order $n+1$ each,
 since for instance the family consisting of one path and $n-1$ stars does not decompose $K_{n,n}$ if $n\ge 3$.

\subsection{Gyárfás' Tree Packing Conjecture}

In 1976, Gyárfás (see~\cite{GL76-trees_into_kn}) put forward
a conjecture which is now widely known as the Tree Packing Conjecture.
For ease of notation, let $T_i$  denote an arbitrary tree of
order~$i$.

\begin{conjecture}[Gyárfás' Tree Packing Conjecture~\cite{GL76-trees_into_kn}]\label{tpc}
  Any family of trees $T_1,\dots,T_n$ packs into~$K_n$.
\end{conjecture}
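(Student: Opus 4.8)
The plan is to split the family into weight classes, handle the bulk by a nibble-type iterative embedding into a host that is kept quasirandom, and treat separately the few near-spanning trees and the vertices of large degree; it should be stressed at the outset that this conjecture is open, so what follows is a strategy rather than a complete argument. Fix a small $\eps>0$ and call $T_i$ \emph{tiny} if $i<\eps n$, \emph{huge} if $i\ge(1-\eps)n$, and \emph{medium} otherwise. Since $\sum_{i=1}^n(i-1)=\binom n2$, the tiny trees carry only $\Oh(\eps^2 n^2)$ edges in total and there are fewer than $\eps n$ huge trees; the bulk of the $\binom n2$ edges sits in the medium class, so that is the main body of work. Orthogonally to this partition, fix a degree threshold $\Delta_0$ and, inside every tree, single out the \emph{core}: the subtree spanned by the vertices of degree at least $\Delta_0$ together with their neighbourhoods. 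In a medium tree the core has $\Oh(n/\Delta_0)$ vertices, but in a huge tree of maximum degree close to $n$ it can be large, and this is the source of the difficulty.

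I would process the huge trees first, using absorption: set aside at the start a small random reservoir $R$ together with a supply of absorbing gadgets, embed the core of each huge tree into $R$ via the absorbers, and attach the remaining low-degree parts using their bare-path structure. Then I would pack the medium trees one by one by a Rödl nibble / iterative-embedding scheme, maintaining as an invariant that the graph obtained from $K_n$ by deleting all edges used so far is $(\eps,d)$-quasirandom with every degree within $\oh(n)$ of its expectation. For each medium tree, decompose it (in the style of Montgomery--Pokrovskiy--Sudakov) into its small core plus $\Oh(1)$ pairwise disjoint bare paths covering all but $\oh(n)$ of its vertices; embed the core first, then the bare paths, using a quantitative long-path and connecting lemma for the quasirandom leftover --- precisely a Komlós--Sárközy--Szemerédi-type statement, of the kind whose approximate bipartite form is proved in this paper --- and record the small errors to be repaired later through $R$.

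The step I expect to be the genuine obstacle is the global treatment of large-degree vertices, the near-stars being the extremal examples. For a huge tree of maximum degree comparable to $n$ the core is not small, and embedding the cores of such trees greedily, one after another, will overload some vertex of $K_n$ well beyond its share of the roughly $\binom n2$ edges, so the high-degree branch vertices of all such trees must be placed in a single globally coordinated step. In the pure-star case $T_i=K_{1,i-1}$ this is exactly the problem of orienting $K_n$ so that the out-degree sequence is the permutation $(0,1,\dots,n-1)$, which a transitive tournament achieves; the general case calls for a fractional, degree-respecting simultaneous placement of the heavy stars of all trees, solved as a transportation problem and then rounded to an integral assignment by a Pippenger--Spencer / nibble-type rounding, with the rounding defects absorbed into $R$. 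Only once the heavy parts are pinned down would the light parts (bare paths and the bounded-degree remainder) be embedded as above. Making this global star-balancing compatible with the local path-embedding --- keeping the host quasirandom while still meeting the rigid degree constraints imposed by the near-stars --- is the crux, and is, I believe, exactly why the conjecture remains open in full generality.

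Finally, after the huge and medium trees are packed, the leftover graph uses all but $\Oh(\eps^2 n^2)+\oh(n^2)$ edges and, if the embedding has been randomised enough, is still quasirandom on a controlled vertex set; I would pack the tiny trees into it by a further nibble, tracking the evolving degree sequence, and finish with a defect-Hall / greedy step that both places the last $\oh(n)$ tiny trees and, using that $\sum_{i=1}^n(i-1)$ equals $\binom n2$ exactly, patches the final $\Oh(1)$ edges so that the packing is perfect. The bookkeeping here is by now routine, but choosing the constants so that the error terms coming from the reservoir, the successive nibbles and the rounding are mutually consistent is where real care would be needed.
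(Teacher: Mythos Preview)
The statement you were asked to prove is Gy\'arf\'as' Tree Packing Conjecture, and the paper does \emph{not} prove it: it is stated in the introduction purely as background and remains open, as you yourself note in your first sentence. There is therefore no ``paper's own proof'' to compare your proposal against.

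That said, your write-up is an honest and reasonably well-informed sketch of the kind of programme people have in mind for this problem (splitting into size classes, handling high-degree vertices globally, nibble/absorption for the rest), and you correctly identify the genuine obstruction: the globally coordinated placement of high-degree vertices across many trees simultaneously, with the near-stars as the extremal case. One concrete gap worth flagging is your claim that after packing the huge and medium trees the leftover graph is ``still quasirandom on a controlled vertex set''; once you have placed near-stars and near-spanning trees, the leftover degree sequence is forced to be highly non-uniform (some vertices must have almost all their incident edges already used), so quasirandomness in the usual sense cannot survive, and the tiny trees must be packed into a graph with a prescribed, very uneven degree sequence. This is not a minor bookkeeping issue but a structural one, and it is part of why the conjecture is hard. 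In short: your outline is a fair summary of the landscape, but it is not a proof, and the paper makes no claim to have one either.
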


 Interestingly,
Gyárfás~\cite{G14-packing_chromatic} proved that Conjecture~\ref{tpc} is equivalent to a reformulation where $K_n$ is replaced with an arbitrary $n$-chromatic graph, which had been conjectured in~\cite{GKP12-generalizations_tpc}.
 Partial results on Conjecture~\ref{tpc} are given in~\cite{D98-almost_stars, D02-complete, D07-bounded_diameter, F83-packing_graphs, GKP12-generalizations_tpc, HBK87-trees_complete_graphs, S79-packing_trees}, and (asymptotic versions of) extensions of the conjecture can be found in~\cite{ABCHPT22-tree_packing, ABHP19-packing_degenerate, BHPT16-approximate_tpc, FLM17-packing_separable, KKOT19-blow_up, MRS16-packing_minor}.
Three results are of particular interest in our context:  Bollobás~\cite{B83-packing_trees}  showed that one can pack  $T_1,\dots,T_s$
into $K_n$ if~$s < \frac 1{\sqrt{2}}\,n$. Balogh
and Palmer~\cite{BP13-packing_conjecture} proved that, for sufficiently large~$n$, one can pack $T_n,\dots,T_{n-t+1}$ into~$K_n$ if~$t\le\frac{1}{4}\,n^{1/4}$ and none of the
$T_i$ is a star; or if
$t\le\frac{1}{4}n^{1/3}$ and
 the maximum degree of each $T_i$ is at least~$2n^{1/3}$.
 Janzer and Montgomery~\cite{JM24-largest_trees}
 improved this by showing that the~$\Omega(n)$ largest trees of any
 sequence~$T_1,\dots,T_n$ packs into~$K_n$.

In~\cite{HBK87-trees_complete_graphs} a
variant of Conjecture~\ref{tpc} for bipartite host graphs was  suggested.

\begin{conjecture}[Hobbs, Bourgeois, and Kasiraj~\cite{HBK87-trees_complete_graphs}]
  \label{conj:tpc_bipartite}
  Any family of trees $T_1,\dots,T_n$ packs into~$K_{\ceil{\frac{n}{2}},n-1}$.
\end{conjecture}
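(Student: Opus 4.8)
Conjecture~\ref{conj:tpc_bipartite} remains open, and it is essentially tight: $K_{\ceil{n/2},n-1}$ has $\ceil{n/2}(n-1)$ edges while $\sum_{i=1}^{n}(i-1)=\binom n2$, so for even $n$ the desired packing is in fact a decomposition, and for odd $n$ it leaves only $\Theta(n)$ edges unused. Thus the conjecture is a bipartite sibling of Ringel's conjecture (Conjecture~\ref{conj:ringel}) and of the Tree Packing Conjecture (Conjecture~\ref{tpc}), and a full proof should require the same absorption technology. The plan is to combine the approximate bipartite Koml\'os--S\'ark\"ozy--Szemer\'edi theorem stated in the abstract with an absorption scheme, along the following lines.

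Fix a small constant $\eps>0$ and call $T_i$ \emph{large} if $i\ge\eps n$ and \emph{small} otherwise; then the large trees carry a $1-\Oh(\eps^2)$ fraction of all edges and the small trees the remaining $\Oh(\eps^2)$ fraction, so neither part may be packed with much waste. First I would set aside inside $K_{\ceil{n/2},n-1}$ a small, pseudorandom, \emph{adaptively chosen} absorbing bipartite graph $R$, reserving a constant fraction of the edge budget (equivalently, a few of the large trees) to be packed into $R$ at the end together with whatever the main phases leave over; the main phases then pack everything else into $H:=K_{\ceil{n/2},n-1}-E(R)$. It is here that exactness for even $n$ bites: $R$ must be chosen so that $|E(R)|$ plus the number of edges of $H$ left uncovered equals the total number of edges of the reserved trees, which forces the adaptivity.

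For the large trees I would embed $T_n,T_{n-1},\dots$ in decreasing order of size into the subgraph of $H$ of still-unused edges. Each $T_i$ first receives a good $2$-colouring and an \emph{address} specifying which colour class sits on the $\ceil{n/2}$-side and which on the $(n-1)$-side. Since $T_i$ $2$-colours with classes of sizes $a\le\ceil{i/2}$ and $i-a\le n-1$ (using $i\le n$), each single tree fits; the real work is to choose all addresses globally so that, as trees are inserted, the degrees on both sides stay within $\pm\oh(n)$ of their targets. Here large near-stars must park their unique high-degree vertex on the $\ceil{n/2}$-side and spread their $\approx n$ leaves across the $(n-1)$-side, whereas large near-balanced trees instead occupy almost all of the $\ceil{n/2}$-side; the high-degree vertices of near-stars must moreover be rotated over many vertices so that none saturates early. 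Granted such an assignment, each $T_i$ is embedded by passing the current leftover graph to a near-regular, suitably pseudorandom bipartite subgraph and applying the approximate bipartite Koml\'os--S\'ark\"ozy--Szemer\'edi theorem (bounded-degree trees directly; trees with a few high-degree vertices after a preliminary placement of those vertices, in the spirit of Janzer and Montgomery's argument), while a running bookkeeping that tracks degrees and corrects greedily keeps the accumulated error $\oh(n)$.

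The small trees are then placed by a random greedy or depth-first embedding into what remains: being of order $<\eps n$ their colour classes are tiny, so each can be steered away from the $\oh(n)$ almost-saturated vertices; finally the reserved trees are packed into $R$ together with the $\oh(n^2)$ leftover edges of $H$. The decisive obstacle, I expect, is precisely the exactness for even $n$. Unlike an approximate packing into a slightly oversized host such as $K_{(1+\eps)n,(1+\eps)n}$, here not a single edge may be wasted, so one needs an absorber $R$ flexible enough to mop up an arbitrary small deficiency yet whose size is pinned down by the edge identity $\sum_{i=1}^n(i-1)=\binom n2$. Building such a bipartite absorber (together with an address assignment) that simultaneously copes with near-star trees and with near-balanced large trees, and showing that the greedy and bipartite-KSS phases leave a remainder of exactly the shape $R$ can repair, is the hard part, mirroring the difficulties that were resolved for Ringel's conjecture and for the Tree Packing Conjecture in the complete-graph setting.
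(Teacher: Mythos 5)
You have not given a proof, and none should be expected: the statement you were asked about is Conjecture~\ref{conj:tpc_bipartite}, an open conjecture of Hobbs, Bourgeois, and Kasiraj which this paper merely states and does not prove. The paper's only engagement with it is to cite partial progress (Yuster's packing of the first $\bigl\lfloor\sqrt{5/8}\,n\bigr\rfloor$ trees, the bounded-degree result of Kim, K\"uhn, Osthus, and Tyomkyn, and the approximate version of B\"ottcher, Hladk\'y, Piguet, and Taraz), so there is no ``paper proof'' to compare against. To your credit, you recognize this explicitly, and your edge count is correct: for even $n$ the packing would be a decomposition, and for odd $n$ only $\Theta(n)$ edges are spare, which is indeed why the problem is of Ringel-type difficulty rather than accessible to the almost-spanning methods of this paper (Theorem~\ref{thm:emb_rooted_tree} loses a $\gamma n$ fraction on each side and Theorem~\ref{thm:packing_trees} packs only $n^{1/2-\gamma}$ trees, far from the $n$ trees and zero slack required here).

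As a piece of mathematics, however, your proposal is a research programme, not an argument. Every step that carries the real difficulty is named rather than executed: the construction of an ``adaptively chosen'' bipartite absorber $R$ whose size matches the exact edge identity, the global assignment of bipartition ``addresses'' keeping both side-degrees within $\oh(n)$ of target in the presence of near-stars and near-balanced trees simultaneously, the claim that the leftover graph stays pseudorandom enough for a bipartite KSS-type embedding after many adaptive deletions, and the final claim that the remainder has ``exactly the shape $R$ can repair.'' Each of these is precisely the kind of statement that consumed the bulk of the work in the resolutions of Ringel's conjecture and of the large-tree cases of the Tree Packing Conjecture, and none of them follows from the tools available in this paper (in particular, Theorem~\ref{thm:emb_rooted_tree} requires minimum degree $(\frac12+\gamma)n$ in a \emph{balanced} host and embeds only \emph{balanced} almost-spanning trees, so it does not directly apply to the unbalanced host $K_{\lceil n/2\rceil,n-1}$ or to highly unbalanced trees such as stars). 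So the honest assessment is: the statement remains open, your sketch is a plausible high-level plan aligned with how one might attack it, but it contains no proof and the decisive gaps you yourself flag are genuine.
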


After previous work in~\cite{CR90-bipartite_conjecture, ZL77-decomposition_graphs}, Yuster~\cite{Y95-complete_bipartite}
showed that if $s \leq \left\lfloor\sqrt{\frac58}\,n\right\rfloor \approx 0.79 n$,
then any family of
trees~$T_1,\dots,T_s$ packs into~$K_{\frac{n}{2},n-1}$, for~$n$ even, and
into~$K_{\frac{n-1}{2},n}$, for~$n$ odd.
Kim, K\"uhn, Osthus, and Tyomkyn~\cite[Corollary 8.6]{KKOT19-blow_up} gave a packing of almost all trees~$T_1,\dots,T_n$ (leaving out the first few trees), if their degree is bounded by a constant, and $n$ is large.
B\"{o}ttcher, Hladký, Piguet, and Taraz~\cite[Theorem 44]{BHPT16-approximate_tpc} proved a version of this result for families of trees whose total number of edges is at most~$e(K_{\ceil{\frac{n}{2}},n-1})$.

%
%
%
%

In 2013, Hollingsworth~\cite{H13-balanced_trees} presented a variant of
Conjecture~\ref{tpc} for balanced trees.
Let~$T_{i,i}$ denote an arbitrary tree with $i$ vertices in each partition
class.

\begin{conjecture}[Hollingsworth~\cite{H13-balanced_trees}]
  \label{conj:tpc_balanced}
  Any family of trees $T_{1,1},\dots,T_{n,n}$ packs into $K_{n,n}$.
\end{conjecture}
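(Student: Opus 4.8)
\medskip\noindent\textbf{A possible line of attack.}
Although Conjecture~\ref{conj:tpc_balanced} remains open, the following strategy seems natural.

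The plan is to combine an absorption reservoir with an iterative, nibble-style embedding, porting to the balanced bipartite setting the techniques developed for Ringel's conjecture by Montgomery, Pokrovskiy and Sudakov and for the Gy\'arf\'as sequence by Janzer and Montgomery. The observation that drives the approach is that $\sum_{i=1}^{n}(2i-1)=n^{2}=e(K_{n,n})$, so any packing of $T_{1,1},\dots,T_{n,n}$ into $K_{n,n}$ is automatically \emph{perfect}: Hollingsworth's conjecture is really a \emph{decomposition} statement, there is no slack, and a genuine absorbing mechanism is therefore unavoidable, since $(1+\oh(1))$-type embedding arguments waste a positive fraction of edges. Accordingly, fix a small $\varepsilon>0$, call $T_{i,i}$ \emph{large} if $i\ge\varepsilon n$ and \emph{small} otherwise, and before embedding anything reserve inside $K_{n,n}$ a sparse pseudorandom ``absorber'' $R$ of density $\delta\ll\varepsilon$ together with a short sub-family $\mathcal A$ of the small trees, chosen via the machinery of iterative absorption and conflict-free hypergraph matching so that, for \emph{every} bipartite graph $L$ on $V(K_{n,n})$ whose degree sequence is compatible with $\mathcal A$, the union $R\cup L$ decomposes into exactly the trees of $\mathcal A$. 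The trees of $\mathcal A$ and the edges of $R$ are set aside for the very end.

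Next, embed the large trees inside $K_{n,n}\setminus R$, one at a time and from largest to smallest, deleting used edges after each step. For the low-degree part of a tree this is exactly the situation handled by the approximate bipartite Koml\'os--S\'ark\"ozy--Szemer\'edi theorem proved here: every sufficiently dense, quasi-random, almost-regular balanced bipartite graph contains a copy of any prescribed balanced tree of not-too-large maximum degree. The iteration then rests on a robustness lemma to the effect that deleting the edges of one embedded balanced tree leaves the host quasi-random and almost-regular, its density decreased only by $\Oh(1/n)$, so the process survives the whole $\Theta(n)$-long prefix of large trees. Vertices of large degree are handled first and separately: since $T_{i,i}$ has only $\Oh(i/\Delta_0)$ vertices of degree at least $\Delta_0$, one embeds these ``hubs'' first, sending hubs of distinct trees to distinct vertices and never overloading a vertex of $K_{n,n}$, and then completes each tree by an extension of the bipartite Koml\'os--S\'ark\"ozy--Szemer\'edi theorem that also respects the already-placed hubs. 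After all large trees have been embedded, every vertex of $K_{n,n}$ is still incident with $(1-\oh(1))n$ unused edges.

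Finally, deal with the small trees. The unused graph $G'$ is now a quasi-random, almost-$\varepsilon n$-regular balanced bipartite graph, and every remaining tree has fewer than $\varepsilon n$ vertices per class, so all remaining small trees other than those of $\mathcal A$ can be embedded greedily by a R\"odl-nibble procedure: each such tree is short enough relative to the available density that a vertex-by-vertex random embedding succeeds with room to spare while keeping $G'$ quasi-random at every step. What is then left unused is a bipartite graph $L$ whose degree sequence is exactly the one dictated by $\mathcal A$, so by the design of $R$ the graph $R\cup L$ decomposes into the trees of $\mathcal A$, completing a perfect packing.

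The main obstacle is the construction of the absorber $R$ together with the endgame it feeds. Because the packing must be perfect and the trees $T_{i,i}$ may have \emph{arbitrary} maximum degree, $R$ must already accommodate the required vertices of high degree in the correct bipartition class, while remaining robust enough that the leftover $L$ --- whose precise edge set is never controlled --- is certain to lie in its ``basin of attraction''. Reconciling arbitrary degrees with zero slack is exactly what separates the full conjecture from the $\lfloor n^{1/2+\gamma}\rfloor$-trees, $(1-\gamma)n$-vertices-per-class approximate version established in this paper, and is where essentially all the difficulty lies; even for trees of bounded maximum degree, closing Hollingsworth's conjecture would still require an absorber and a perfect-packing argument of essentially this kind.
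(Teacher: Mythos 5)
The statement you are addressing is not proved in the paper at all: Conjecture~\ref{conj:tpc_balanced} is stated as an open problem (attributed to Hollingsworth), and the paper only establishes the much weaker Theorem~\ref{thm:packing_trees} (about $n^{\frac12-\gamma}$ trees with $(1-\gamma)n$ vertices per class) and the bounded-degree consequence of Theorem~\ref{thm:gio}. Your text, by its own admission, is also not a proof but a programme: the step you label as ``the main obstacle'' --- constructing an absorber $R$ and a reserved family $\mathcal A$ such that $R\cup L$ decomposes into $\mathcal A$ for \emph{every} leftover $L$ compatible in degree sequence, with trees of arbitrary maximum degree and zero slack --- is precisely the content of the conjecture, and no construction or even a plausible reduction is offered. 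Your opening observation that $\sum_{i=1}^n(2i-1)=n^2$, so the packing must be perfect, is correct and is exactly why none of the approximate tools in this paper (or a nibble) can close the gap on their own; but identifying the difficulty is not the same as resolving it.

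Beyond the missing absorber, several intermediate steps as stated would fail quantitatively. First, the hub-placement step: if every $T_{i,i}$ is, say, a balanced double star, there are $2n$ hubs of linear degree and only $2n$ host vertices, so ``sending hubs of distinct trees to distinct vertices'' forces every vertex to host exactly one hub and turns the completion into an exact degree-bookkeeping (design-type) problem, not something an extension of Theorem~\ref{thm:emb_rooted_tree} handles; more generally $\sum_i O(i/\Delta_0)$ hubs exceeds $2n$ unless $\Delta_0=\Omega(n)$. Second, the robustness claim for the iteration is too coarse: deleting one embedded large tree lowers the global density by $O(1/n)$, but it can remove almost all edges at a single vertex (a star centre), so quasirandomness and near-regularity are not preserved without exactly the kind of degree-balancing the absorber is supposed to provide. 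Third, your accounting of the endgame is internally inconsistent: after embedding the large trees the unused graph has about $(\varepsilon^2+\delta)n^2$ edges, so vertices have $\Theta(\varepsilon^2 n)$ unused edges, not ``$(1-\oh(1))n$'' as claimed two sentences earlier; the nibble phase therefore has far less room than asserted, and the small trees may themselves contain vertices of degree $\Theta(\varepsilon n)$, comparable to the entire remaining degree. In short, the proposal is a reasonable survey of the techniques one would expect to need (iterative absorption, conflict-free matchings, approximate bipartite embedding as in Theorem~\ref{thm:emb_rooted_tree}), but every step that distinguishes the full conjecture from the approximate results already in the paper is left open or stated in a form that breaks on natural examples such as families of double stars.
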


Using Yuster's~\cite{Y95-complete_bipartite} result,
Hollingsworth~\cite{H13-balanced_trees} proved\footnote{A slightly weaker bound for $s$ is stated in~\cite{H13-balanced_trees}, but one can infer the bound we state here from the proof in~\cite{H13-balanced_trees}.} that
 $T_{1,1},\dots,T_{s,s}$ pack into $K_{n,n}$, for $n \geq 3$ and
$s \leq \frac 1{\sqrt{2}}\,n$.

\subsection{Ringel-type tree-packing for balanced trees}

Our first result, Theorem~\ref{thm:packing_trees} below,
is a step towards a variant of Ringel's conjecture for bipartite graphs,
in the lines of Conjecture~\ref{conj:tpc_balanced},
where we manage to pack trees of roughly the same order as the host graph's.

\begin{theorem}\label{thm:packing_trees}
  For every $\gamma > 0$, there exists $n_0$ such that for every~${n \geq n_0}$
  any family of at most~$n^{\frac 12-\gamma}$ rooted trees, each with at most
  $(1-\gamma)n$ vertices in either partition class,
  packs into~$K_{n,n}$, with all tree roots embedded in
  the same part of~$K_{n,n}$.
\end{theorem}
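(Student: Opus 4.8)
The plan is to embed the trees one at a time (or in small batches) into $K_{n,n}$ using a randomised absorption-type strategy, keeping the used edges well-spread so that each successive tree can be embedded in the remaining graph. Since the number of trees is only $n^{1/2-\gamma}$ and each tree has at most $(1-\gamma)n$ vertices per side, the total number of edges we ever use is at most $n^{1/2-\gamma}\cdot(1-\gamma)n < n^{3/2-\gamma/2}$, which is $o(n^2)$; so edge-count is never the bottleneck and we have a lot of slack. The real content is to maintain, throughout the process, that the graph of still-available edges looks sufficiently like $K_{n,n}$ — in particular that it is close to regular and has good expansion/quasirandomness — so that we can invoke an approximate bipartite embedding result (the promised bipartite version of Koml\'os--S\'ark\"ozy--Szemer\'edi) to place the next tree, whose order $(1-\gamma)n$ leaves a linear-sized gap $\gamma n$ on each side.

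Concretely, the first step is to set up the bipartite approximate KSS statement: for a bipartite graph $G$ on parts of size $n$ each that is $(\delta)$-quasirandom (or has minimum degree $(1-o(1))n$ and no sparse cuts), any tree $T$ with at most $(1-\gamma)n$ vertices in each class and maximum degree at most, say, $cn/\log n$ embeds into $G$ with its bipartition respecting that of $G$, and moreover the embedding can be taken to be ``spread'' — the image of $T$ uses each vertex of $G$ with roughly the expected frequency, so that the leftover graph after deleting the image edges is again quasirandom with only slightly worse parameters. The second step is the iteration: starting from $K_{n,n}$, embed $T^{(1)}_{1,1}, T^{(2)}_{2,2},\dots$ in turn, each time deleting the used edges; after $k\le n^{1/2-\gamma}$ steps the available graph $G_k$ has lost at most $k(1-\gamma)n = o(n^{3/2})$ edges, spread roughly evenly, so its minimum degree is still $n - o(\sqrt n)$ and its quasirandomness parameter has degraded by only $o(1)$; hence the hypotheses of the bipartite KSS result still hold and we can continue. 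The roots are handled by simply always embedding the root of each tree on the same prescribed side of $K_{n,n}$ — the bipartite embedding lemma is side-respecting, so this is automatic.

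The main technical obstacle is the bipartite KSS embedding lemma itself, i.e.\ controlling the embedding so the leftover stays quasirandom: standard tree-embedding arguments (greedy embedding of a path/spider decomposition, or the absorption method of Montgomery) give an embedding but not automatically a ``balanced'' one that hits every vertex of $G$ the right number of times, and without that, after $\Theta(\sqrt n)$ rounds some vertices could become saturated and destroy regularity. To fix this I would build in a balancing mechanism: reserve a small random subset of each part as a ``buffer'', embed the bulk of the tree greedily while tracking degrees, and use the buffer together with a Hall/flow argument (or a random perturbation of the embedding) to redistribute load so that the used-degree at every vertex is within $O(\sqrt{n\log n})$ of the average. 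A secondary issue is the maximum-degree condition: the theorem as stated imposes none, but a tree on $(1-\gamma)n$ vertices per side can have a vertex of degree up to almost $2n$; such high-degree vertices (and more generally, large stars hanging off a vertex) must be embedded first into vertices of $G$ of nearly full degree and then peeled off, after which the remaining forest has all components small or of bounded degree and the quasirandom embedding applies — so the proof will split each tree into a bounded-degree part plus a bounded number of large ``brooms'' and handle the brooms by an ad hoc matching argument before invoking the main lemma.
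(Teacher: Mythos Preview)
Your skeleton is the same as the paper's --- iterate a bipartite Koml\'os--S\'ark\"ozy--Szemer\'edi embedding, and strip off high-degree vertices before applying it --- but you manufacture an obstacle that the paper sidesteps entirely. You worry that after $\Theta(\sqrt{n})$ rounds some vertices of the host could be saturated, and propose to fix this with a \emph{spread} embedding lemma, buffers, and load-balancing. None of that is needed. The bipartite KSS theorem (Theorem~\ref{thm:emb_rooted_tree}) requires only a minimum-degree hypothesis $\delta(G)\ge(\tfrac12+\gamma)n$, not quasirandomness, and the trivial observation is that embedding a forest of maximum degree~$\Delta$ lowers~$\delta(G)$ by at most~$\Delta$. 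So the whole game is to choose the high-degree threshold correctly.

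The paper sets this threshold at~$\Theta(\sqrt{n})$: from each side of each tree it removes the $\lfloor 8\sqrt{n}/c\rfloor$ vertices of highest degree, which forces the remaining forest to have maximum degree at most $c\sqrt{n}/2$ (since a tree has $<2n$ edges). All stripped vertices, over all $t\le n^{1/2-\gamma}$ trees, number at most $t\cdot 8\sqrt{n}/c = O(n^{1-\gamma})<\gamma n$ per side, so they fit into reserved sets $A',B'$ of size $\lfloor\gamma n\rfloor$ using \emph{distinct} host vertices --- hence their incident edges can never collide. The forests are then packed greedily into $K_{n,n}-(A'\cup B')$: after $i$ rounds the minimum degree has dropped by at most $i\cdot c\sqrt{n}\le c\,n^{1-\gamma}=o(n)$, so it stays above $(\tfrac12+\gamma)(n-n')$ throughout and Theorem~\ref{thm:emb_rooted_tree} (via its forest corollary) applies at every step. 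Your version strips only down to $\Delta\le cn/\log n$, which is why you are then forced to control degrees via spreading; stripping more aggressively to $\Delta=O(\sqrt{n})$ makes the spread-embedding machinery --- the part you yourself flag as the main technical obstacle --- completely unnecessary.
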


Note that the packing from Theorem~\ref{thm:packing_trees} is only possible because we are far from fully decomposing~$K_{n,n}$. Indeed, $T_{n,n}$ cannot possibly decompose $K_{n,n}$. Furthermore, for every $\eps>0$, there exists a balanced tree $T_{(1-\eps)n,(1-\eps)n}$ which does not approximately decompose~$K_{n,n}$. To see this, consider a balanced double star $D$ with $(1-\eps)n$ vertices in either partition class and note that any vertex of $K_{n,n}$ that accommodates one of the two central vertices of some copy of~$D$ can only accommodate $\eps n$ leaves of other copies.
If we wish to decompose a complete bipartite graph $K_{m,m}$ with copies of a tree $T_{n,n}$, then $m$ should be larger than $n$.
 See Section~\ref{final:treepackconj} for more discussion. 

%

Theorem~\ref{thm:packing_trees} can be applied to the setting of Conjecture~\ref{conj:ringel_bipartite}, as we can transform any tree $T:=T_{n+1}$ into a balanced tree $T':=T_{n+1, n+1}$ by adding an appropriate edge between  two copies of $T$. Theorem~\ref{thm:packing_trees} allows us to pack about $\sqrt n$ copies of $T'$ and thus about $2\sqrt n$ copies of $T$ into $K_{(1+\eps)n, (1+\eps)n}$, for any $\eps>0$ and large $n$. However, this is superseded by the result from~\cite{M23-large_rainbow} we mentioned above.

Returning to Conjecture~\ref{conj:tpc_balanced},
an immediate consequence of
Theorem~\ref{thm:packing_trees} is that, for any $\gamma > 0$ and sufficiently large $n$, any family of
 trees $T_{n,n},\dots,T_{n-\sqrt n+1,n-\sqrt n+1}$ packs into~$K_{(1+\gamma)n,(1+\gamma)n}$. Moreover, using heavy machinery from~\cite{KKOT19-blow_up}, we can embed a larger class of trees in the setting of Conjecture~\ref{conj:tpc_balanced}. Indeed, it is easy to check that along the lines of the proof of Theorem 8.5 from~\cite{KKOT19-blow_up}, one can deduce the following result from Theorem 8.1 of~\cite{KKOT19-blow_up}.
 \begin{theorem}\label{thm:gio}
  Suppose $0 < 1/n \ll \eps \ll\alpha, 1/\Delta$. Let $H_1,\dots,H_s$ be
  $2n$-vertex balanced bipartite graphs with $\Delta(H_i) \leq \Delta$, for
  each $i \in [s]$. Let $G$ be an $(\eps,d)$-super-regular balanced bipartite
  graph. If $\sum_{i=1}^{s}e(H_i) \leq (1-\alpha)e(G)$, then $H_1,\dots,H_s$
  pack into $G$.
\end{theorem}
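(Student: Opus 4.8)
The plan is to deduce Theorem~\ref{thm:gio} directly from Theorem~8.1 of~\cite{KKOT19-blow_up} by following the reduction given in the proof of Theorem~8.5 there, with only cosmetic changes to account for the fact that here we pack into an abstract super-regular bipartite host rather than into a blow-up. First I would fix the hierarchy $0 < 1/n \ll \eps \ll d, \alpha, 1/\Delta$ (absorbing $d$ into the implicit constants, since $d$ will be a fixed constant bounded away from $0$ and $1$), and recall what Theorem~8.1 of~\cite{KKOT19-blow_up} provides: given a bounded-degree family of bipartite graphs whose total size is a small enough fraction of the edges of a sufficiently super-regular bipartite graph, the family packs, provided the family also satisfies whatever mild balancedness/size conditions are imposed there. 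The point of Theorem~8.5 in~\cite{KKOT19-blow_up} is to massage an arbitrary bounded-degree family into one meeting those conditions; I would reuse that massaging verbatim.

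The key steps are as follows. First, split the index set $[s]$ so that the graphs $H_i$ are grouped by size, and if necessary pad each $H_i$ with isolated vertices (or pair up smaller graphs) so that every graph in the family has exactly $2n$ vertices and is balanced — this is already part of the hypothesis here, so this step is essentially free, but I would still check that adding isolated vertices does not affect $\Delta$ or the total edge count, hence preserves $\sum_i e(H_i)\le(1-\alpha)e(G)$. Second, I would invoke the $(\eps,d)$-super-regularity of $G$ to verify the quantitative regularity hypothesis of Theorem~8.1: with $G$ balanced bipartite on $2n$ vertices and $(\eps,d)$-super-regular, one has $e(G) = (d\pm O(\eps))n^2$, each vertex has degree $(d\pm\eps)n$, and every pair of large vertex subsets spans the expected density; these are exactly the inputs Theorem~8.1 wants. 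Third, feed the (possibly regrouped) family and $G$ into Theorem~8.1, choosing the parameters so that $\eps$ is small enough relative to $\alpha,1/\Delta,d$ — the slack $\alpha e(G)$ in the edge count is what absorbs the loss in Theorem~8.1 — and conclude that the family packs into $G$.

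I expect the only real work to be bookkeeping: matching the precise parameter hierarchy and the precise form of the conditions in Theorem~8.1 and~8.5 of~\cite{KKOT19-blow_up} (for instance, whether they require the $H_i$ to be connected, or to have a linear number of leaves, or some spread condition, and if so arguing that one may reduce to that case by the standard trick of attaching pendant edges or deleting a bounded number of edges and re-adding them greedily at the end using the unused $\alpha e(G)$ edges). The main obstacle, therefore, is not a new idea but making sure that \emph{every} structural hypothesis hidden inside Theorem~8.1 of~\cite{KKOT19-blow_up} is met by an \emph{arbitrary} bounded-degree balanced bipartite $H_i$; if one of those hypotheses genuinely cannot be removed for free, the fallback is to follow~\cite[Theorem~8.5]{KKOT19-blow_up} in decomposing each $H_i$ into a bounded number of pieces that individually satisfy the hypothesis, pack the pieces, and then note that the union over $i$ still fits because the total size is unchanged. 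Since the statement asks only to \emph{deduce} the result "along the lines of" the proof of Theorem~8.5, I would keep this argument short, cite the relevant lemmas of~\cite{KKOT19-blow_up} for each structural reduction, and leave the verbatim repetition of their estimates to the reader.
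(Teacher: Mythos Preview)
Your proposal is exactly the approach the paper itself indicates: the paper does not give a detailed proof of this theorem but simply asserts that ``it is easy to check that along the lines of the proof of Theorem~8.5 from~\cite{KKOT19-blow_up}, one can deduce the following result from Theorem~8.1 of~\cite{KKOT19-blow_up}.'' Your plan---verify the super-regularity hypotheses of Theorem~8.1, handle any structural side conditions by the reductions in the proof of Theorem~8.5, and absorb the losses into the $\alpha e(G)$ slack---is precisely that deduction spelled out, so there is nothing to compare.
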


%

An immediate consequence of Theorem~\ref{thm:gio} is the following.

\begin{corollary}
  Suppose $0 < 1/n \ll \alpha, 1/\Delta$. If
  $T_{\lceil \alpha n\rceil,\lceil\alpha n\rceil},\dots,T_{n,n}$ are balanced trees
  such that $T_{i,i}$ has $2i$ vertices, and $\Delta(T_{i,i}) \leq \Delta$,
  for each $\alpha n \leq i \leq n$,
  then $T_{\lceil \alpha n\rceil,\lceil\alpha n\rceil},\dots,T_{n,n}$ pack into $K_{n,n}$.
\end{corollary}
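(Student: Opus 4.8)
The plan is to deduce the corollary directly from Theorem~\ref{thm:gio}, taking the host graph to be $G = K_{n,n}$ and first padding each tree $T_{i,i}$ with isolated vertices so that all guest graphs have exactly $2n$ vertices, as Theorem~\ref{thm:gio} requires.

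First I would set $s := n - \lceil\alpha n\rceil + 1$ and, for each $i$ with $\lceil\alpha n\rceil \le i \le n$, let $H_i$ be the graph obtained from $T_{i,i}$ by adding $n-i$ isolated vertices to each of its two partition classes. Then each $H_i$ is a balanced bipartite graph on $2n$ vertices with $e(H_i) = e(T_{i,i}) = 2i-1$ and $\Delta(H_i) = \Delta(T_{i,i}) \le \Delta$. Since any packing of $H_1,\dots,H_s$ into a $2n$-vertex balanced bipartite graph restricts to a packing of $T_{\lceil\alpha n\rceil,\lceil\alpha n\rceil},\dots,T_{n,n}$, it suffices to pack the graphs $H_i$ into $K_{n,n}$.

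Next I would observe that $K_{n,n}$ is $(\eps,1)$-super-regular for every $\eps \in (0,1)$: every pair of vertex subsets induces density exactly $1$, and every vertex has full degree into the opposite class. Hence $K_{n,n}$ is admissible as the host $G$ in Theorem~\ref{thm:gio} with $d = 1$. It remains to check the edge-count condition. Using $\sum_{i=1}^{m}(2i-1) = m^2$,
\[
  \sum_{i=1}^{s} e(H_i) = \sum_{i=\lceil\alpha n\rceil}^{n}(2i-1)
  = n^2 - \bigl(\lceil\alpha n\rceil - 1\bigr)^2 \le (1-\alpha^2)n^2 + 2\alpha n,
\]
which is at most $(1 - \tfrac{\alpha^2}{2})\,e(K_{n,n})$ as soon as $n \ge 4/\alpha$. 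Writing $\alpha' := \alpha^2/2$ and choosing $\eps$ with $1/n \ll \eps \ll \alpha', 1/\Delta$ — possible for large $n$, since $1/n \ll \alpha$ implies $1/n \ll \alpha'$ — Theorem~\ref{thm:gio}, applied with parameters $\eps$, $\alpha'$, $\Delta$ and host $G = K_{n,n}$, produces a packing of $H_1,\dots,H_s$ into $K_{n,n}$, and restricting it to the original trees finishes the proof.

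I do not expect a genuine obstacle here: the argument is routine once Theorem~\ref{thm:gio} is available. The only two points needing a little care are the padding step (forced by the requirement in Theorem~\ref{thm:gio} that all guest graphs have exactly $2n$ vertices, whereas the trees $T_{i,i}$ have only $2i$ vertices), and bookkeeping the relabelled constant hierarchy $1/n \ll \eps \ll \alpha', 1/\Delta$ so that it is genuinely implied by the hypothesis $1/n \ll \alpha, 1/\Delta$ of the corollary. Neither is difficult.
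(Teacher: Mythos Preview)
Your proposal is correct and is exactly the approach the paper intends: the corollary is stated as ``an immediate consequence of Theorem~\ref{thm:gio}'' with no further proof, and your argument---pad each $T_{i,i}$ to $2n$ vertices, observe $K_{n,n}$ is $(\eps,1)$-super-regular, and check $\sum e(T_{i,i}) = n^2 - (\lceil\alpha n\rceil-1)^2 \le (1-\alpha^2/2)n^2$---is precisely that immediate deduction. The only cosmetic slip is that you define $H_i$ for $\lceil\alpha n\rceil\le i\le n$ but then apply the theorem to ``$H_1,\dots,H_s$''; a relabelling fixes this.
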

%
%
%
%
%
%

\subsection{A bipartite Koml\'os--S\'ark\"ozy--Szemer\'edi Theorem}

Our proof of Theorem~\ref{thm:packing_trees} is given in  Section~\ref{sec:thm4} and
relies on the following idea. We  embed the high
degree vertices of each tree into distinct vertices of $K_{n,n}$, and pack the rest of the trees into the remainder. For the packing, we
develop a tree embedding result for dense bipartite graphs, namely Theorem~\ref{thm:emb_rooted_tree} below.

We believe Theorem~\ref{thm:emb_rooted_tree} may be of independent interest and useful for other embedding problems with bipartite host graphs.
It can be viewed as a bipartite approximate version of the following well-known and widely used result.

\begin{theorem}[Koml\'os, S\'ark\"ozy and Szemer\'edi~\cite{KSS01-spanning_trees}]
  \label{thm:KSS}
For each~$\gamma > 0$ there are~$c, n_0 > 0$
such that for all~${n \geq n_0}$ every graph on $n$ vertices with minimum
degree at least~$(\frac 12 + \gamma)n$ contains a copy of every tree
on~$n$ vertices with maximum degree at most~$\frac{cn}{\log n}$.
\end{theorem}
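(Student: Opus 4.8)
The plan is to use the regularity method for embedding spanning subgraphs. Fix constants with $1/n_0\ll\eps\ll d\ll\gamma$ and a small $c=c(\gamma)>0$. Applying Szemer\'edi's Regularity Lemma to $G$ produces an exceptional set $V_0$ with $|V_0|\le\eps n$ and clusters $V_1,\dots,V_k$ of common size $m=\Theta(n)$, with $k$ bounded in terms of $\eps$; let $R$ be the reduced graph on $[k]$, with $ij\in E(R)$ whenever $(V_i,V_j)$ is $\eps$-regular of density at least $d$. A routine averaging argument turns $\delta(G)\ge(\tfrac12+\gamma)n$ into $\delta(R)\ge(\tfrac12+\tfrac\gamma2)k$, so by Dirac's theorem $R$ has a Hamilton cycle, which we relabel $V_1V_2\cdots V_kV_1$. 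Moving at most $2dm$ vertices of each cluster into $V_0$, we may further assume every consecutive pair $(V_i,V_{i+1})$ around the cycle is $(\eps,d/2)$-super-regular. Since $m=\Theta(n)$ while the tree we must embed has maximum degree at most $cn/\log n$, a single cluster holds all the children of any one tree vertex many times over.

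Next I would prepare the tree. Root $T$ at a leaf and list its vertices $t_1,\dots,t_n$ in breadth-first order, so each parent precedes its children. Call a vertex \emph{heavy} if it has more than $\log^2 n$ children; there are at most $2n/\log^2 n$ of these. Deleting the heavy vertices leaves subtrees of maximum degree at most $\log^2 n\ll m$, which can be cut (by a standard tree-partition argument) into pieces of order $\Oh(m)$; sweeping once along the Hamilton cycle then gives an order in which these pieces are mapped to consecutive super-regular pairs $(V_i,V_{i+1})$, advancing to the next pair as the current clusters fill up, with cut vertices handing over from one pair to the next. The children of a heavy vertex cannot be forced into the ``next'' cluster; they will instead be distributed among many clusters, exploiting that each vertex of $G$ has at least $(\tfrac12+\gamma)n$ neighbours. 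Finally I reserve, inside every cluster, a pool of about $\tfrac d{100}m$ vertices, to be used only for absorbing the children of heavy vertices together with the at most $\eps n$ tree vertices that end up placed in $V_0$.

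The embedding $\varphi$ is then built greedily in the order $t_1,t_2,\dots$, with the image of each $t_j$ chosen \emph{uniformly at random} among the currently valid candidates. When $t_j$ sits at $x\in V_i$, each child of $t_j$ must land in the unused part $U$ of $N(x)\cap V_{i+1}$ (or of the appropriate reservoir, for children of a heavy vertex). If $t_j$ is light this is immediate from $\eps$-regularity: as long as $|U|\ge2\eps m$, all but at most $\eps m$ of $U$ lies in $N(x)$, so each child — indeed each whole light subtree below $t_j$ — can be placed. If $t_j$ is heavy with $D$ children, we need $|N(x)\cap U|\ge D$; super-regularity gives $|N(x)\cap V_{i+1}|\ge\tfrac d2 m$, but it does not prevent the already-used vertices from landing disproportionately inside $N(x)$. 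This is where the randomisation is essential: the used part of any cluster is then a spread-out set, so a concentration estimate (Azuma for the vertex-exposure martingale, or McDiarmid) gives $|N(x)\cap U|\ge\tfrac d8|U|\ge D$ except with small probability, and a union bound over all the relevant pairs $(t_j,x)$ shows that with positive probability the greedy process never gets stuck. The tree vertices eventually placed in $V_0$ are embedded in the same randomised way, using the genuine minimum degree $(\tfrac12+\gamma)n$ of $G$ and the reserved pools.

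The hard part will be the heavy vertices and their interaction with the randomised greedy embedding: one must route the children of a heavy vertex into clusters so that each resulting sub-subtree still has room to be completed and the embedding stays synchronised with the Hamilton-cycle sweep, and then push the concentration-plus-union-bound through at every scale of degree simultaneously. It is precisely this probabilistic core — matching exponentially small failure probabilities against the number of high-degree vertices and the length of the embedding process — that costs the $\log n$ factor, and a matching construction shows the hypothesis $\Delta(T)\le cn/\log n$ is of the right order. I also note that one cannot shortcut this by applying the Blow-up Lemma after the regularity reduction, since the tree's maximum degree grows with $n$; the hands-on randomised greedy embedding is exactly the price one pays for that.
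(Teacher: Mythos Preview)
The paper does not prove this theorem: Theorem~\ref{thm:KSS} is quoted from~\cite{KSS01-spanning_trees} as background and motivation for the paper's own bipartite analogue (Theorem~\ref{thm:emb_rooted_tree}), and no proof or sketch of it appears anywhere in the text. So there is nothing to compare your proposal against here.

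That said, your outline is recognisably the Koml\'os--S\'ark\"ozy--Szemer\'edi strategy: regularity, a Hamilton cycle in the reduced graph, super-regularisation along the cycle, a tree decomposition separating heavy from light vertices, and a randomised greedy sweep with Azuma-type concentration to handle the high-degree vertices (the step that costs the $\log n$). As a high-level plan this is on target, though several steps are underspecified in ways that matter in the actual proof: the synchronisation of the Hamilton-cycle sweep with the tree pieces so that clusters are filled \emph{exactly} (not merely approximately) is delicate; the routing of the children of a heavy vertex through many clusters while keeping each subtree below them completable requires a more careful accounting than ``exploiting that each vertex has at least $(\tfrac12+\gamma)n$ neighbours''; and the reservoir/absorption at the end needs a precise description of which tree vertices are deferred and why the reserved pools suffice. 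None of this is wrong in spirit, but a referee would not accept the sketch as a proof without these mechanisms spelled out.
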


A bipartite generalization of this theorem for graphs with
bounded maximum degree and sublinear bandwidth was proved by B\"{o}ttcher, Heinig, and Taraz in~\cite{BHT10-embedding}. Since every tree
with bounded maximum degree has sublinear bandwidth, their result
implies a bipartite version of Theorem~\ref{thm:KSS} for trees whose
maximum degree is bounded by a constant.
Our bipartite version of 
Theorem~\ref{thm:KSS}
 reads as follows.


\begin{theorem}
  \label{thm:emb_rooted_tree}
  For each $\gamma > 0$, there are $c, 
  n_0 > 0$ such that the following
  holds for every~${n \geq n_0}$. If $G=(A,B,E)$ is a balanced bipartite
  graph on $2n$ vertices with~${\delta(G) \geq (\frac{1}{2}+\gamma)n}$,
  and $T$ is a 
  balanced rooted tree on $2(1-\gamma)n$ vertices
  with $\Delta(T) \leq cn$, then~$T$ embeds in~$G$ with the root of~$T$
  embedded in~$A$.
\end{theorem}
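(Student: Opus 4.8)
The plan is to adapt the absorption-based strategy of Koml\'os--S\'ark\"ozy--Szemer\'edi to the bipartite setting, using the regularity method together with the standard embedding lemma for trees in super-regular pairs. First I would apply the Szemer\'edi Regularity Lemma (bipartite version) to $G$ with a tiny parameter~$\eps \ll \gamma$, obtaining a reduced graph~$R$ on the cluster set. Because $\delta(G) \geq (\tfrac12+\gamma)n$, a routine counting argument shows that~$R$ inherits a minimum-degree condition of the form~$\delta(R) \geq (\tfrac12 + \tfrac{\gamma}{2})|V(R)|$; in particular~$R$ has a perfect fractional matching and, more to the point, one can find in~$R$ a "connected matching" covering almost all clusters, or alternatively a Hamilton-type structure that lets the clusters be traversed coherently. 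The key structural fact I would extract is that, after discarding a small exceptional set and cleaning up the pairs to be super-regular, the clusters can be organized into a backbone along which a tree of the right size can be grown cluster by cluster. The balancedness of~$G$ and the requirement that the root land in~$A$ means I must keep track of which side of the bipartition each cluster lies on, and ensure the alternation of tree levels matches the alternation of sides — this is the genuinely new bookkeeping compared to the non-bipartite proof.

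Next I would decompose~$T$. Since~$\Delta(T)\le cn$ and~$T$ has~$2(1-\gamma)n$ vertices, a standard lemma (as in~\cite{KSS01-spanning_trees}) lets me cut~$T$ into~$O(1/\eps)$ bare-ish subtrees, or into a bounded number of small subtrees hanging off a set of "connector" vertices, in such a way that each piece has~$O(\eps n)$ vertices and the connectors number~$O(1/\eps)$. I then embed the connector vertices first, greedily, into suitable clusters using the minimum-degree condition, and afterwards embed each small subtree into a single super-regular pair (or a short chain of pairs) using the tree-embedding lemma for $(\eps,d)$-super-regular bipartite graphs — here I would cite or reprove the statement that a super-regular pair with parts of size~$m$ contains every tree on~$(1-\eps')2m$ vertices of maximum degree~$O(\eps' m / \log m)$, which is exactly where the~$cn/\log n$-type degree bound enters (note the theorem as stated allows $\Delta(T)\le cn$, so in fact we have room to spare and can afford a weaker embedding lemma). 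The slack of~$\gamma n$ vertices on each side is what absorbs the exceptional set from the regularity lemma and the inefficiency of filling clusters only up to a~$(1-\eps')$-fraction.

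The main obstacle, I expect, is coordinating the parity/side constraints with the connectivity of the reduced graph: I need the backbone through~$R$ to alternate sides in lockstep with the levels of~$T$, and simultaneously to have enough flexibility that the~$O(1/\eps)$ connector vertices of~$T$ — which may sit at arbitrary levels and hence demand clusters on prescribed sides — can all be placed with their neighbourhoods landing in clusters that are still "available." In the non-bipartite KSS proof this is handled by the high connectivity of the reduced graph; in the bipartite case I would instead establish that the reduced graph, restricted to either side-class of clusters, together with the cross edges, is robustly connected enough (e.g.\ that between any two clusters there is a walk of bounded length respecting the required side-pattern), which follows from~$\delta(R) \geq (\tfrac12+\tfrac\gamma2)|V(R)|$ by a short argument. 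A secondary technical point is the final vertex-count balancing: after embedding, the number of tree-vertices placed in~$A$ versus~$B$ is forced by~$T$'s structure (it is~$(1-\gamma)n$ on each side), so I must make sure the embedding never overfills a side, which the per-cluster~$(1-\eps')$-cap and the choice of backbone guarantee. Once these pieces are in place, concatenating the embeddings of the connectors and the small subtrees yields the desired copy of~$T$ with its root in~$A$.
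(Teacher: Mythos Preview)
Your high-level strategy --- regularity lemma, cut the tree at a bounded set of ``connector'' vertices into small pieces, embed the pieces into cluster pairs, and use the minimum degree of the reduced graph for connectivity --- is exactly what the paper does. But you over-engineer several steps and leave one genuinely under-specified. First, no absorption is needed: $T$ has only $2(1-\gamma)n$ vertices, so the $\gamma n$ slack already swallows the exceptional set and the rounding losses; the paper never invokes anything like the KSS absorption. Second, you do not need super-regular pairs or a tree-embedding lemma with a $cn/\log n$ bound: each small piece has size at most $\beta t \ll |X_i|$, so a plain greedy embedding into an $\eps$-regular pair, choosing typical vertices level by level, suffices. Third, you do not need a Hamilton-type backbone or a connected matching in $R$; a \emph{perfect matching} $M$ of $R$ (which exists by Hall once $\delta(R)>(1/2+\gamma/4)s$) is enough. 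The parity issue you worry about evaporates: each matching edge $X_iY_i$ already straddles the two sides of the bipartition, so the even and odd levels of every small piece go to $X_i$ and $Y_i$ respectively; transitions between matching edges are handled by reserving a small ``linking'' slice in every cluster and using that, for any already-embedded seed, the minimum-degree condition on $R$ guarantees a common neighbour cluster of its current cluster and the target matching edge.

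The step you do not adequately address is the \emph{assignment} of the small pieces to matching edges so that no $X_i$ or $Y_i$ overfills. ``The choice of backbone'' does not settle this: one must actually partition the family $\mathcal P$ of small subtrees into groups $\mathcal P_1,\dots,\mathcal P_s$ with $\sum_{P\in\mathcal P_i}|V(P)\cap V_e|$ and $\sum_{P\in\mathcal P_i}|V(P)\cap V_o|$ each below $(1-\Omega(1))|X_i|$. The paper does this with a short balancing lemma (Lemma~\ref{lemma:camila}), which takes the place of your backbone argument and is where the balancedness of $T$ is actually used. Once you replace the Hamilton structure by a matching plus linking slices, drop super-regularity, and insert an explicit assignment lemma, your outline becomes the paper's proof.
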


Note that in Theorem~\ref{thm:emb_rooted_tree}, the bound on $\Delta(T)$ is
 better than in Theorem~\ref{thm:KSS} and in 
  the result from~\cite{BHT10-embedding}. However, our trees are only almost spanning, while in these other results the trees are spanning. Moreover, one does not expect to have a bound as in our theorem for spanning trees as we will explain in Section~\ref{sec:final}.

The proof of Theorem~\ref{thm:emb_rooted_tree} is given in
Section~\ref{sec:embed}, and relies on the regularity method and embedding results for trees which are discussed in  Section~\ref{sec:prelim}. Possible extensions of Theorem~\ref{thm:emb_rooted_tree} are discussed in  Section~\ref{sec:final}.

\section{Preliminaries}\label{sec:prelim}

\subsection{Regularity}

Let~$A,B$ be nonempty disjoint sets of vertices of a graph $G$.
Define their~\emph{density} as $d(A,B) := \frac{e(A,B)}{|A||B|}$.
For~$\eps >0$ we call~$A' \subseteq A$~\emph{$\eps$-significant}
if~${|A'| \geq \eps |A|}$, and analogously for $B'\subseteq B$. We say $(A,B)$
is \emph{$\eps$-regular} if $|d(A,B)-d(A',B')| \leq \eps$
for all $\eps$-significant subsets~$A' \subseteq A$, $B' \subseteq B$.
If furthermore $d(A,B)\ge d$ for some $d\ge 0$, we
call~$(A,B)$~\emph{$(\eps,d)$-regular}.
A vertex $v \in A$ is called
\emph{$\eps$-typical} to an $\eps$-significant set~$B'\subseteq B$ if
$\deg(v,B') \geq (d(A,B)-\eps)|B'|$, and use an analogous definition
for~$v \in B$. For $\eps>0$ we write $x=y\pm\eps$ if $x\in[y-\eps, y+\eps]$.

The next fact contains two well known properties of regular
pairs (see~\cite{KS96-regularity}).

\begin{fact}
  \label{fact:split_regular_pairs}
  Let $(A,B)$ be an $\eps$-regular pair of density $d$, and
  let $\eps, \delta>0$. Then the following hold:
  \begin{enumerate}[(a)]
    \item For each $\eps$-significant $B' \subseteq B$, all but at
      most $\eps|A|$ vertices from $A$ are $\eps$-typical to~$B'$.
    \item For all $\delta$-significant $A' \subseteq A$ and $B'\subseteq B$,
      the pair $(A',B')$ is~$\frac{2\eps}{\delta}$-regular with density
      $d \pm \eps$.
  \end{enumerate}
\end{fact}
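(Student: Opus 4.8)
The plan is to derive both parts directly from the definition of $\eps$-regularity, in each case by producing a significant subpair whose density is forced to be close to $d = d(A,B)$, and then comparing.

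\emph{Part (a).} I would set $X := \{v \in A : \deg(v,B') < (d-\eps)|B'|\}$, the set of vertices of $A$ that fail to be $\eps$-typical to $B'$ (note vertices with atypically \emph{large} degree to $B'$ are automatically $\eps$-typical, so they never lie in $X$). Argue by contradiction: if $|X| \ge \eps|A|$, then $X$ is $\eps$-significant. Summing the defining inequality over all $v \in X$ gives $e(X,B') < |X|(d-\eps)|B'|$, hence $d(X,B') < d-\eps$. On the other hand, $B'$ is $\eps$-significant by hypothesis, so $\eps$-regularity of $(A,B)$ applied to the pair $(X,B')$ yields $d(X,B') \ge d-\eps$, a contradiction. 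Therefore $|X| < \eps|A|$, which is exactly the assertion.

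\emph{Part (b).} Here $\delta \ge \eps$ is implicitly assumed (it is already needed for the density bound). Then $A'$ and $B'$, being $\delta$-significant, are in particular $\eps$-significant, so $\eps$-regularity of $(A,B)$ gives $|d(A',B') - d| \le \eps$, i.e.\ $(A',B')$ has density $d \pm \eps$. For the regularity claim, let $A'' \subseteq A'$ and $B'' \subseteq B'$ be arbitrary $\tfrac{2\eps}{\delta}$-significant subsets of $A'$ and $B'$ respectively. Then $|A''| \ge \tfrac{2\eps}{\delta}|A'| \ge \tfrac{2\eps}{\delta}\cdot\delta|A| = 2\eps|A|$, so $A''$ is $\eps$-significant in $A$, and likewise $B''$ is $\eps$-significant in $B$. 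Applying $\eps$-regularity of $(A,B)$ to $(A'',B'')$ and to $(A',B')$ gives $d(A'',B'') = d\pm\eps$ and $d(A',B') = d\pm\eps$, so $|d(A'',B'')-d(A',B')| \le 2\eps \le \tfrac{2\eps}{\delta}$ (using $\delta \le 1$). Since $A'',B''$ were arbitrary, $(A',B')$ is $\tfrac{2\eps}{\delta}$-regular.

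\emph{Main obstacle.} There is essentially none; this is the textbook computation. The only point needing care is the bookkeeping of significance thresholds in part (b): one must verify that a $\tfrac{2\eps}{\delta}$-significant subset of $A'$ remains $\eps$-significant inside $A$, which is precisely what the extra factor of $2$ in $\tfrac{2\eps}{\delta}$ pays for (a bare $\tfrac{\eps}{\delta}$ would already force $\eps$-significance, but the cruder bound keeps the constant clean). The degenerate range $\delta < \eps$ is outside the intended regime — then $\tfrac{2\eps}{\delta} > 1$ and the regularity conclusion carries no content — so assuming $\delta \ge \eps$ loses nothing.
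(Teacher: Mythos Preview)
Your argument is correct and is exactly the standard derivation from the definition of $\eps$-regularity. The paper does not supply its own proof of this fact; it merely records it as well known and refers to~\cite{KS96-regularity}, so there is nothing to compare against.
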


We also need the following fact, which has been observed
before (see e.g.\cite{PS12-approximate_loebl}), but for completeness we include its short proof.

\begin{fact}\label{l:many-typical}
For all $i \in [s]$ let
$(X,Y_i)$ be $(\eps,d)$-regular and let $Y_i'\subseteq Y_i$
be an $\eps$-significant set. Then
at least $\bigl(1-\sqrt{\eps}\bigr)|X|$ vertices in~$X$ are~$\eps$-typical
to at least $(1-\sqrt{\eps})s$ sets $Y_i'$.
\end{fact}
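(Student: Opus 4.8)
The plan is to run a straightforward double-counting argument, using Fact~\ref{fact:split_regular_pairs}(a) as the only input. First I would apply that fact to each pair $(X,Y_i)$ separately: since $Y_i'\subseteq Y_i$ is $\eps$-significant, all but at most $\eps|X|$ vertices of $X$ are $\eps$-typical to $Y_i'$. Call a vertex $v\in X$ \emph{$i$-bad} if it fails to be $\eps$-typical to $Y_i'$. Then for each fixed $i$ there are at most $\eps|X|$ $i$-bad vertices, so the total number of pairs $(v,i)$ with $v$ being $i$-bad is at most $\eps|X|s$.

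Next I would call a vertex $v\in X$ \emph{bad} if it is $i$-bad for more than $\sqrt{\eps}\,s$ indices $i\in[s]$, and let $B\subseteq X$ be the set of bad vertices. Counting the pairs $(v,i)$ with $v$ $i$-bad from the vertex side gives a lower bound of $|B|\cdot\sqrt{\eps}\,s$ (each bad vertex alone contributes more than $\sqrt{\eps}\,s$ such pairs), so $|B|\cdot\sqrt{\eps}\,s\le\eps|X|s$, whence $|B|\le\sqrt{\eps}\,|X|$. Every vertex of $X\setminus B$ is $i$-bad for at most $\sqrt{\eps}\,s$ indices, i.e.\ is $\eps$-typical to at least $(1-\sqrt{\eps})s$ of the sets $Y_i'$, and $|X\setminus B|\ge(1-\sqrt{\eps})|X|$, which is exactly the claim.

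There is essentially no obstacle here: the only points requiring a moment's care are bookkeeping ones — making sure the strict/non-strict inequalities in the definition of ``bad'' line up so that the final count goes through (one can safely use $>\sqrt{\eps}\,s$ for bad vertices and $\le\sqrt{\eps}\,s$ for the good ones), and noting that the argument does not need the lower density bound $d$ at all, only $\eps$-regularity of each pair. If desired, one can absorb any rounding in $\sqrt{\eps}\,s$ into the statement without changing anything, since the conclusion is stated with the same quantity $(1-\sqrt{\eps})s$.
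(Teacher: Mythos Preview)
Your argument is correct and is essentially the same double-counting as in the paper, just phrased directly rather than by contradiction: the paper assumes more than $\sqrt{\eps}\,|X|$ vertices are atypical to more than $\sqrt{\eps}\,s$ sets, counts at least $\eps|X|s$ bad pairs, and derives a contradiction with Fact~\ref{fact:split_regular_pairs}(a) via pigeonhole on the index~$i$. Your observation that only $\eps$-regularity (not the density bound~$d$) is used, and your care with strict versus non-strict inequalities, are both apt.
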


\begin{proof}
  Suppose, for a contradiction, that the statement is false.
  Then more than~$\sqrt{\eps}|X|$~vertices in $X$
  are not $\eps$-typical to at least $\sqrt{\eps}s$ of the sets~$Y_i'$. So there are at least
  $\eps|X|s$ pairs $(v,i)$ such that $v\in X$ is not $\eps$-typical to $Y'_i$.
  Thus, there is $i \in [s]$ such that more than $\eps|X|$ vertices of $X$ are atypical to $Y'_i$,
  contradicting Fact~\ref{fact:split_regular_pairs}(a).
\end{proof}

Szemerédi's regularity lemma~\cite{S78-regular} states that every large graph
has a partition into a bounded number of vertex sets, most of which are pairwise $\eps$-regular.
We will need a version of this lemma for  bipartite
graphs $G=(V,W,E)$.
An \emph{$(\eps,d)$-regular partition of $G$} is a pair
$(\mathcal{X},\mathcal{Y})$ such that $\mathcal{X}$ is a partition of $V$, $\mathcal{Y}$ is a partition of $W$,
there exist $X_0 \in \mathcal{X}$ and $Y_0 \in \mathcal{Y}$
with $|X_0|, |Y_0| \leq \eps n$, and 
 for all $X \in \mathcal{X}\setminus\{X_0\}$ and $Y \in \mathcal{Y}\setminus \{Y_0\}$ it holds that
\begin{enumerate}
  \item $|X|=|Y|$,
  \item $X$ and $Y$ are independent sets, and
  \item $(X,Y)$ is $\eps$-regular with density either $d(X,Y) > d$ or
    $d(X,Y) = 0$.
\end{enumerate}
We often call the sets $X \in \mathcal{X}$, $Y\in \mathcal{Y}$ \emph{clusters}.
The \emph{$(\eps,d)$-reduced 
graph}~$R$ of~$G$ with
respect to~$(\mathcal{X},\mathcal{Y})$ is the graph on vertex
set~$(\mathcal{X}\setminus\{X_0\})\cup(\mathcal{Y}\setminus\{Y_0\})$ having an
edge between~${X \in \mathcal{X}}$
and~${Y \in \mathcal{Y}}$ whenever~${d(X,Y) > d}$.
We will use the following variant of Szemerédi's regularity lemma.

\begin{lemma}[Bipartite Regularity Lemma~\cite{BHT10-embedding}]
  \label{lem:regularity}
  For every $\eps > 0$ and $k_0 \in \NN$, there is a $K_0$ such
  that the following holds for every $d \in [0, 1]$ and $n \geq K_0$.
  For every balanced bipartite graph~$G$ on $2n$ vertices
  with~$\delta(G) \geq \lambda n$ for some $0 < \lambda < 1$, there exists
  a spanning subgraph $G' \subseteq G$,
  a graph $R$, and an integer $s$ with $k_0 \leq s \leq K_0$ with
  the following properties:
  \begin{enumerate}[(a)]
    \item $R$ is the $(\eps,d)$-reduced graph with respect to
      an~$(\eps,d)$-regular partition of~$G'$.
    \item $|V(R)| = 2s$ and $\delta(R) \geq (\lambda - (d+\eps))s$. \label{prop:reduced_graph}
  \end{enumerate}
\end{lemma}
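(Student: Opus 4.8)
The plan is to deduce the statement from the classical Szemer\'edi regularity lemma applied to $G$ as an ordinary graph, with the bipartition $\{V,W\}$ prescribed as the initial partition to be refined, and then to carry out the usual ``cleaning'' that turns a regular partition into a regular partition of a spanning subgraph together with its reduced graph. Concretely, I would first fix a sufficiently small auxiliary parameter $\eps_1=\eps_1(\eps)>0$ (it suffices to take $\eps_1$ of order $\eps^2$) and set $\ell_0:=3k_0$. Applying Szemer\'edi's lemma with parameter $\eps_1$, at least $\ell_0$ parts, and initial partition $\{V,W\}$ yields, for all $n$ above a threshold $K_0=K_0(\eps,k_0)$, an equitable partition $V_0\cup V_1\cup\dots\cup V_\ell$ of $V\cup W$ that refines $\{V,W\}$, with $\ell_0\le\ell\le K_0$, in which $|V_0|\le 2\eps_1n$, every cluster $V_i$ (for $i\ge1$) has the same size $m$ and lies entirely inside $V$ or entirely inside $W$, and all but at most $\eps_1\ell^2$ of the pairs $(V_i,V_j)$ are $\eps_1$-regular.

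Next I would clean this partition. Since $|V|=|W|=n$ and all clusters have size $m$, the numbers $a,b$ of clusters inside $V$ and inside $W$ satisfy $|a-b|\,m\le|V_0|\le 2\eps_1n$, so I move the $|a-b|$ excess clusters of the larger side into a growing exceptional set. Calling a cluster \emph{bad} if it forms an $\eps_1$-irregular pair with at least $\sqrt{\eps_1}\,\ell$ other clusters, a double count of the at most $\eps_1\ell^2$ irregular pairs bounds the number of bad clusters by $2\sqrt{\eps_1}\,\ell$; I move all of these into the exceptional set as well (their $V$-part into $X_0$, their $W$-part into $Y_0$) and rebalance the two sides once more. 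This leaves a common number $s$ of clusters on each side, with $k_0\le s\le K_0$ (for $\eps_1$ small one has $s\ge\ell/3\ge\ell_0/3=k_0$), and exceptional sets $X_0,Y_0$ of size $\Oh(\sqrt{\eps_1}\,n)\le\eps n$ each. Now let $G'$ be obtained from $G$ by deleting every edge meeting $X_0\cup Y_0$, every edge of an $\eps_1$-irregular cluster pair, and every edge of a cluster pair $(X,Y)$ with $d_G(X,Y)\le d$. Then, with $\mathcal X=\{X_0\}\cup\{\,V\text{-clusters}\,\}$ and $\mathcal Y=\{Y_0\}\cup\{\,W\text{-clusters}\,\}$, the pair $(\mathcal X,\mathcal Y)$ is an $(\eps,d)$-regular partition of $G'$: the clusters have equal sizes and are independent sets (each lies inside $V$ or inside $W$, and $G$ has no edge there), and each surviving cluster pair is either left untouched, hence still $\eps_1$-regular (so $\eps$-regular) with $d_{G'}(X,Y)=d_G(X,Y)>d$, or has been emptied, hence has density $0$. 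Taking $R$ to be the associated $(\eps,d)$-reduced graph establishes property (a), and $|V(R)|=2s$.

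It then remains to verify the minimum-degree bound, where I may assume $\lambda>\eps$ since otherwise (b) is vacuous. Fix a cluster $X\subseteq V$ and sum over $v\in X$: each $v$ has $\deg_G(v)\ge\lambda n$, and each edge at $v$ deleted in passing to $G'$ either lands in $Y_0$ (at most $|Y_0|=\Oh(\sqrt{\eps_1}\,n)$ of these), or joins $v$ to a cluster $Y$ with $(X,Y)$ irregular, or to a cluster $Y$ with $d_G(X,Y)\le d$. As $X$ is not bad it has at most $\sqrt{\eps_1}\,\ell$ irregular partners, contributing at most $\sqrt{\eps_1}\,\ell\,m\,|X|=\Oh(\sqrt{\eps_1}\,n\,|X|)$ endpoints over all of $X$; the sparse partners contribute at most $\sum_Y e_G(X,Y)\le d\,s\,m\,|X|$ endpoints. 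Hence $\sum_{v\in X}\deg_{G'}(v)\ge|X|\bigl((\lambda-\Oh(\sqrt{\eps_1}))\,n-d\,s\,m\bigr)$, whereas $\sum_{v\in X}\deg_{G'}(v)=\sum_{Y:\,XY\in E(R)}e_{G'}(X,Y)\le\deg_R(X)\,m\,|X|$. Dividing through by $m\,|X|$, using $s\,m\le n$, and recalling that $\eps_1$ was chosen so small that the $\Oh(\sqrt{\eps_1})$ error is below $\eps$, I obtain $\deg_R(X)\ge(\lambda-d-\eps)s$. The same computation applied to clusters $Y\subseteq W$ yields $\delta(R)\ge(\lambda-d-\eps)s$, which is (b).

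The step I expect to be the main obstacle is the cleaning in the second paragraph: one must run Szemer\'edi's lemma with a parameter $\eps_1$ genuinely smaller than $\eps$ and absorb into $X_0\cup Y_0$ both the side-imbalance and the clusters with too many irregular partners, so that the degree each vertex loses --- when its edges to $X_0\cup Y_0$ and to its irregular partner clusters are removed --- stays comfortably within the single slack $\eps$ permitted by the statement. Pinning down the quantitative dependence $\eps_1=\eps_1(\eps)$ is the only genuinely delicate point; the remaining parts --- rebalancing the two sides, checking that the surviving pairs constitute an $(\eps,d)$-regular partition of $G'$, and the degree averaging --- amount to routine estimates.
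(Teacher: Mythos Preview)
The paper does not prove this lemma; it is quoted as a known tool from~\cite{BHT10-embedding} and used without argument. So there is no ``paper's own proof'' to compare against.

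That said, your outline is the standard derivation and is essentially correct: apply Szemer\'edi's lemma with the bipartition $\{V,W\}$ as a pre-partition and a smaller parameter $\eps_1$, then clean by (i) absorbing the side-imbalance into the exceptional set, (ii) absorbing clusters with too many irregular partners, (iii) rebalancing, and (iv) deleting all edges touching $X_0\cup Y_0$, lying in irregular pairs, or lying in sparse pairs. Your degree-averaging for part~(b) is the right computation; note that you implicitly use $n=sm+|X_0|$, so that $n/m\ge s$, which is what turns the bound $(\lambda-\Oh(\sqrt{\eps_1}))\,n/m-ds$ into $(\lambda-d-\eps)s$.

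Two minor points worth tightening if you write this out in full. First, when you say ``$\eps_1$ of order $\eps^2$ suffices'', be aware that the loss terms you accumulate are of order $\sqrt{\eps_1}$, so you actually need $\sqrt{\eps_1}$ to be a small multiple of~$\eps$; taking $\eps_1=c\eps^2$ with a sufficiently small absolute constant $c$ is what you mean. Second, the lower bound $s\ge\ell/3$ should be justified explicitly: each side initially has at least $\ell/2-|a-b|$ clusters, you discard at most $2\sqrt{\eps_1}\,\ell$ bad clusters in total, and then rebalance once more, so $s\ge\ell/2-\Oh(\sqrt{\eps_1})\ell\ge\ell/3$ for $\eps_1$ small; combined with $\ell\ge\ell_0=3k_0$ this gives $s\ge k_0$. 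Neither point is a gap, just a place where the quantitative dependence deserves a sentence.
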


\subsection{Tree partitioning and assigning results}

Our embedding strategy relies on the embedding of small trees into regular pairs.
The next result shows that any tree can be cut into small subtrees with few connecting vertices.
This type of result already appeared in the literature, probably first
in~\cite{AKS95-conjecture_loebl}. Here we use a version from~\cite{BPS19-degree_conditions}.
In what follows, let $r(T)$ denote the root of a rooted tree~$T$.

\begin{proposition}[Proposition 4.1 in~\cite{BPS19-degree_conditions}]
  \label{prop:split_trees}
    For each $\beta \in (0,1)$, each $t > \beta^{-1}$ and each rooted tree~$T$ with $t$ edges, there is  a set $S \subseteq V(T)$
    and a family $\calP$ of disjoint rooted trees such that
    \begin{enumerate}[(a)]
      \item $r(T) \in S$ and $|S| < \frac{1}{\beta} + 2$,
      \item $\calP$ consists of the components of $T-S$, with each
        tree $P\in\calP$ rooted at the vertex $r(P)$ closest to~$r(T)$ in $T$, and
      \item $|V(P)| \leq \beta t$ for each $P \in \calP$.
    \end{enumerate}
\end{proposition}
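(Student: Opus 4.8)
The plan is to build $S$ by a greedy top-down cutting procedure and then bound $|S|$ by a charging argument. Fix a root $r(T)$ of $T$. For a subtree $P\subseteq T$ (rooted at its vertex closest to $r(T)$) and $v\in V(P)$, write $P_{\downarrow v}$ for the set of $w\in V(P)$ whose path in $P$ to the root of $P$ passes through $v$; thus $v\in P_{\downarrow v}$, it is the part of $P$ hanging below $v$, and deleting $v$ from $P$ splits it into the ``upper'' piece $V(P)\setminus P_{\downarrow v}$ together with the pieces $P_{\downarrow u}$ for the children $u$ of $v$ in $P$. I would set $S:=\{r(T)\}$ and then, as long as some component $P$ of $T-S$ has $|V(P)|>\beta t$, pick among all $v\in V(P)$ with $|P_{\downarrow v}|>\beta t$ (the root of $P$ is one, so the choice is possible) a vertex $v^\ast$ of \emph{maximum} distance from $r(T)$ in $T$, and add $v^\ast$ to $S$. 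By the distance-maximality of $v^\ast$, every child $u$ of $v^\ast$ in $P$ satisfies $|P_{\downarrow u}|\le\beta t$, so each lower piece created by this cut has at most $\beta t$ vertices; consequently the number of vertices lying in components of $T-S$ of size $>\beta t$ strictly decreases at each step, and the procedure terminates. Taking $\calP$ to be the set of components of $T-S$ at termination, each rooted at its (unique, since each such component is a subtree of $T$) vertex closest to $r(T)$, gives (b) by definition, (c) because the loop terminated, and $r(T)\in S$ by construction.

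For (a) I would charge. Let $v_1,\dots,v_k$ be the vertices added after $r(T)$, in the order chosen, and let $Q_i:=(P^{(i)})_{\downarrow v_i}$ where $P^{(i)}$ is the component cut at step $i$; then $|Q_i|>\beta t$ by the choice of $v_i$. The key claim is that $Q_1,\dots,Q_k$ are pairwise disjoint. Indeed, right after $v_i$ is removed, the vertices of $Q_i$ other than $v_i$ lie in the lower pieces $P^{(i)}_{\downarrow u}$, all of size $\le\beta t$, so no vertex of $Q_i$ ever again belongs to a component of size $>\beta t$; hence for $j>i$ the component $P^{(j)}$ is either disjoint from $V(P^{(i)})$ or contained in the upper piece $V(P^{(i)})\setminus Q_i$, and in both cases $Q_j\subseteq V(P^{(j)})$ is disjoint from $Q_i$. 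Since $r(T)\in S$ lies in no component, $r(T)\notin Q_i$ for every $i$, so the $Q_i$ are pairwise disjoint subsets of $V(T)\setminus\{r(T)\}$, a set of size $t$. Therefore
\[
  k\,\beta t \;<\; \sum_{i=1}^{k}|Q_i| \;\le\; t,
\]
whence $k<\beta^{-1}$ and $|S|=k+1<\beta^{-1}+1<\beta^{-1}+2$, as required (the hypothesis $t>\beta^{-1}$ is not actually needed for this bound, but it makes the statement non-vacuous since then $\beta t>1$).

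I expect the only delicate point to be the disjointness of the charged subtrees $Q_i$: one has to be sure that once a cut is made, the part hanging below the cut vertex is broken into pieces all too small to ever be cut again, so that no later charge can reach back into it — this is exactly what forces the choice of $v^\ast$ as a \emph{deepest} vertex with a large hanging subtree. Everything else (existence of a cut vertex at each step, the bound $|P_{\downarrow u}|\le\beta t$ for children of $v^\ast$, termination, the final arithmetic) is routine bookkeeping. One could instead run the argument recursively, peeling off one small subtree at a time, but the global charging version above seems cleanest.
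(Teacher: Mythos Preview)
The paper does not give a proof of this proposition; it is quoted verbatim from~\cite{BPS19-degree_conditions} and used as a black box. So there is no ``paper's proof'' to compare against.

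Your argument is correct and is essentially the standard proof of this folklore decomposition (variants appear in~\cite{AKS95-conjecture_loebl} and~\cite{BPS19-degree_conditions}). The greedy choice of a \emph{deepest} vertex whose hanging subtree exceeds $\beta t$ is exactly the right one: it guarantees that all lower pieces created by the cut are small, which is what makes the charged sets $Q_i$ pairwise disjoint and drives both termination and the counting. Your disjointness argument is clean, and the arithmetic $k\beta t<\sum_i|Q_i|\le |V(T)\setminus\{r(T)\}|=t$ gives $|S|=k+1<\beta^{-1}+1$, even a hair better than the stated bound. One cosmetic remark: termination already follows from the observation that the number of vertices lying in components of size $>\beta t$ drops by at least $|Q_i|>\beta t\ge 1$ at each step, so you need not invoke the disjointness claim twice.
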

   The pair $(S,\mathcal P)$ from Proposition~\ref{prop:split_trees} is called a \emph{$\beta$-decomposition} of $T$.
    The vertices from $S$ are called \emph{seeds}.

Once the tree is decomposed, we will have to decide where to embed
each of the small trees in~$\mathcal P$. For this, we use the following result
from~\cite{SZ22-antidirected}.

\begin{lemma}[Lemma 3.5 in~\cite{SZ22-antidirected}]
  \label{lemma:camila}
  Let $m,s \in \NN, \mu > 0$ and let $(x_i,y_i)_{i \in I}\subseteq \NN^2$ be such that
  \begin{enumerate}[(a)]
    \item\label{camila1} $(1-\mu)\sum_{i \in I}x_i \leq \sum_{i \in I}y_i \leq (1+\mu)\sum_{i \in I}x_i$,
    \item\label{camila2} $x_i + y_i \leq \mu m$, for every $i \in I$, and
    \item\label{camila3} $\max\left\{\sum_{i \in I}x_i, \sum_{i \in I}y_i\right\} < (1-10\mu)ms$.
  \end{enumerate}
  Then there is a partition $\{J_1,\ldots,J_s\}$ of $I$  such that for each $i \in [s]$,
  \[
    \sum_{j \in J_i}x_j \leq (1-7\mu)m\;\;\text{ and }\;\;\sum_{j \in J_i}y_j \leq (1-7\mu)m.
  \]
\end{lemma}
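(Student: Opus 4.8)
The plan is to prove the slightly stronger assertion that $I$ admits a partition into $s$ (possibly empty) parts $J_1,\dots,J_s$ with $\sum_{j\in J_i}x_j\le(1-8\mu)m$ and $\sum_{j\in J_i}y_j\le(1-8\mu)m$ for each $i\in[s]$, which clearly implies the lemma. I would prove this by induction on $s$, peeling off one part at a time; the one delicate point is to peel so that nothing drifts over the course of the $s$ steps.

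For $s=1$ there is nothing to prove, as by (c) the single part $I$ has $x$-sum and $y$-sum below $(1-10\mu)m$. For the inductive step with $s\ge2$, let $\bar x:=\frac1s\sum_{j\in I}x_j$ and $\bar y:=\frac1s\sum_{j\in I}y_j$, both at most $(1-10\mu)m$ by (c). The heart of the step is to find $J_s\subseteq I$ with $\bar x\le\sum_{j\in J_s}x_j\le\bar x+2\mu m$ and $\bar y\le\sum_{j\in J_s}y_j\le\bar y+2\mu m$. I would obtain such a $J_s$ by rounding the uniform fractional selection $z_j=1/s$ for all $j$, which satisfies $\sum_j z_jx_j=\bar x$ and $\sum_j z_jy_j=\bar y$: as long as three coordinates $z_{j_1},z_{j_2},z_{j_3}$ are strictly between $0$ and $1$, the planar vectors $(x_{j_k},y_{j_k})$ are linearly dependent, so some nonzero $(c_1,c_2,c_3)$ annihilates them, and replacing $z_{j_k}$ by $z_{j_k}+\lambda c_k$ for $|\lambda|$ maximal subject to $z\in[0,1]^I$ preserves both weighted sums while driving at least one of these coordinates to $\{0,1\}$. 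When at most two coordinates remain fractional, round them up to $1$ and set $J_s:=\{j:z_j=1\}$; by (b), rounding a coordinate up increases the $x$-sum by at most $x_j\le\mu m$ and the $y$-sum by at most $y_j\le\mu m$ and never decreases either, which yields the required bounds.

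Such a $J_s$ is then admissible, since its $x$-sum is at most $\bar x+2\mu m\le(1-8\mu)m$ and likewise its $y$-sum. Moreover $I':=I\setminus J_s$ satisfies $\sum_{j\in I'}x_j=s\bar x-\sum_{j\in J_s}x_j\le(s-1)\bar x\le(1-10\mu)m(s-1)$ and similarly $\sum_{j\in I'}y_j\le(1-10\mu)m(s-1)$, so (c) holds for $I'$ with $s-1$ in place of $s$, while (b) is inherited; the induction hypothesis applied to $I'$ produces $J_1,\dots,J_{s-1}$ and finishes the step. (Hypothesis (a) is not actually needed for this argument.)

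I expect the only real obstacle to lie in this asymmetry: it is crucial that $J_s$ take \emph{at least} the average of each coordinate, not merely something within $2\mu m$ of it. With only a two-sided bound the residual averages $\bar x,\bar y$ could creep up by $\Theta(\mu m/s)$ per step and hence by $\Theta(\mu m\log s)$ overall, which would violate (c) for the subproblems and overshoot the target $(1-8\mu)m$; rounding the leftover fractional coordinates \emph{up} is exactly what suppresses the drift. As an alternative that avoids the induction, one can instead take a vertex of $\{z\in[0,1]^{I\times[s]}:\sum_iz_{ji}=1\ \forall j,\ \sum_jx_jz_{ji}\le(1-10\mu)m\ \text{and}\ \sum_jy_jz_{ji}\le(1-10\mu)m\ \forall i\}$, note that a standard rank count forces the set $W$ of fractionally assigned items to satisfy $|W'|\le 2|N(W')|$ for every $W'\subseteq W$ in the support graph, and use a deficiency version of Hall's theorem to distribute those items at most two per bin, so that each bin ends up with $x$-load and $y$-load at most $(1-10\mu)m+2\mu m$.
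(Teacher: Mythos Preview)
The paper does not prove this lemma; it is quoted verbatim as Lemma~3.5 of~\cite{SZ22-antidirected} and used as a black box in Step~4 of the proof of Theorem~\ref{thm:emb_rooted_tree}. So there is no ``paper's own proof'' to compare against.

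Your argument is correct. The pipage-rounding step is sound: three fractional coordinates give three vectors in $\mathbb{R}^2$, hence a nontrivial kernel direction along which you can move while preserving both weighted sums until a coordinate hits $\{0,1\}$; terminating with at most two fractional coordinates and rounding them \emph{up} adds at most $2\mu m$ to each sum (by~\ref{camila2}) and, crucially, never removes mass, so the lower bounds $\sum_{j\in J_s}x_j\ge\bar x$ and $\sum_{j\in J_s}y_j\ge\bar y$ hold. This is exactly what keeps the residual averages from drifting: the verification that $I'$ again satisfies~\ref{camila3} with $s-1$ in place of $s$ is clean, and the induction closes with the uniform $(1-8\mu)m$ bound. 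Your remark that hypothesis~\ref{camila1} plays no role is also correct for this proof; the balance condition is not needed once one rounds in two dimensions simultaneously.

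The alternative one-shot LP approach you sketch at the end is also viable, though the ``standard rank count'' deserves one more sentence of care: at a vertex of that polytope the tight constraints span $\mathbb{R}^{I\times[s]}$, and since only $|I|+2s$ non-box constraints are available, at most $|I|+2s$ coordinates can be fractional, which combined with the $|I|$ assignment equalities gives the $|W'|\le 2|N(W')|$ Hall-type bound you need. Either route yields the lemma.
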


\section{Embedding trees in dense balanced graphs: proof of Theorem~\ref{thm:emb_rooted_tree}}
\label{sec:embed}

 To improve readability, we omit all floors and ceilings.
  We may assume $\gamma < \frac 12$.
  Apply Lemma~\ref{lem:regularity} with~$\eps=\left(\frac{\gamma}{120}\right)^2$
  and $k_0=\frac1{\eps}$ to obtain~$K_0$. Set 
  \begin{equation*}\label{eq:n0def}
    c = \frac{\eps\gamma}{50K_0^2} \;\; \text{ and }\;\;n_0 = \frac{2K_0^4}{\eps}.
  \end{equation*}
  Let $n \geq n_0$ and let $G$ and $T$ be as in the statement of
  Theorem~\ref{thm:emb_rooted_tree}. We proceed in five steps:
  decomposition of~$G$, decomposition of~$T$ into $(S, \mathcal P)$, setting linking zones,
  assigning the trees in~$\mathcal P$ to clusters, and embedding $T$.

  \medskip\noindent
  \emph{Step 1.~Decomposition of $G$.}
  By Lemma~\ref{lem:regularity} with~$d = 5\sqrt{\eps}$ and $\lambda = \frac12+\gamma$,
  there are a spanning subgraph $G'\subseteq G$, a graph $R$, and an integer~$s$ with $k_0 \leq s \leq K_0$ such that
  $R$ is the $(\eps,d)$-reduced graph with respect to an $(\eps,d)$-regular
  partition~$(\mathcal{X},\mathcal{Y})$ of $G'$, with $\mathcal X=\{X_1,\dots,X_s\}$ and $\mathcal Y=\{Y_1,\dots,Y_s\}$ and
  \begin{equation}\label{mindegR}
    \delta(R)  \geq  \left(\frac{1}{2}+\frac{\gamma}{4}\right)s.
    \end{equation}
  A straightforward application of Hall's Theorem (see~\cite{hall})
 yields with a perfect matching~$M$ of~$R$. We can assume  $M$ pairs $X_i$ with $Y_i$ for each $i \in [s]$.

  \medskip\noindent
  \emph{Step 2.~Decomposition of $T$.} Set $t=|E(T)|$ and $\beta=\frac{\eps\gamma}{K_0^{4}}$.
  Proposition~\ref{prop:split_trees} provides a $\beta$-de\-com\-po\-si\-tion  $(S,\mathcal{P})$ of
  $T$.
  Define the set of \emph{linking vertices} as~${L(T)=\{r(P): P\in\mathcal{P}\}}\setminus\{r(T)\}$.
  Because the parent of each linking vertex is in $S$ and by our choice
  of~$c$, we have
  \begin{equation} \label{eq:n_links}
    | S \cup L(T)|\ \le \ |S| \cdot  \Delta(T) + 1 \ \leq \ \frac{4 K_0}{\eps}\cdot cn\ \leq \ \frac{\gamma}{8}|X_1|.
  \end{equation}

  \smallskip\noindent
  \emph{Step 3.~Setting linking zones.} We reserve a subset of each cluster $X_i$, $Y_i$ for~$L(T)$.
  To this end,  for every~$i \in [s]$ we arbitrarily partition~$X_i$~($Y_i$) into two sets~$X_{i,L}$
  and~$X_{i,P}$ ($Y_{i,L}$ and~$Y_{i,P}$)
  so that~${|X_{i,L}| \ = \ |Y_{i,L}| \ = \ \frac{\gamma}{4}|X_i|}$.
  We  call these subsets the \emph{L-slice} and the \emph{P-slice} of ~$X_i$~($Y_i$).  By Fact~\ref{fact:split_regular_pairs},
  each pair~$(X_{i,J},Y_{j,J'})$, with~$J,J' \in \{L,P\}$,
  is~$\frac{8\eps}{\gamma}$-regular with density exceeding~$\frac d2$.

  \medskip\noindent
  \emph{Step 4.~Assigning the vertices of $T$ to clusters.}
  We now decide which $P\in \calP$ will be
  embedded into each pair of clusters $(X_i,Y_i)$.
  Let $V_e$ (respectively,~$V_o$) be the set of all vertices of $T$ having even (resp.,~odd) distance
  to the root $r(T)$. Set $m:=|X_{1,P}|$ and for each $P \in \mathcal P$, set~$x_P:= |V_e \cap V(P)|$ and
  $y_P := |V_o \cap V(P)|$. As $T$ is 
  balanced,  conditions~\ref{camila1}
  and~\ref{camila3} of Lemma~\ref{lemma:camila} are satisfied with $\mu=c$,
  while~\ref{camila2} holds because~$|V(P)|\le\beta t$
  for each $P\in \mathcal P$.
  Thus, there is a partition $\{\mathcal P_1,\ldots,\mathcal P_s\}$ of~$\mathcal P$
  such that $\sum_{P\in \mathcal P_i} x_P \leq (1-7c)|X_{1,P}|$
  and~$\sum_{P\in \mathcal P_i}y_P \leq (1-7c)|X_{1,P}|$ for every $i \in [s]$.

  \medskip\noindent
  \emph{Step 5.~Embedding $T$.}
  We will successively embed all $v\in S$ and $P\in\calP$, starting with either the seed  $r(T)$ or the tree $P$ that contains  $r(T)$, and then always choosing a seed or a $P\in\calP$ which is adjacent to an already embedded vertex of $T$.
    We will ensure that each $v\in S$ will be embedded in a
  vertex $\phi(v)$ in the $P$-slice of some cluster such that if $\phi(v)\in X$ ($\phi(v)\in Y$), then
  \begin{equation}\label{goooood}
    \text{$\phi(v)$ is typical to $Y_{i, L}$ ($X_{i, L}$) for all but  at most
    $\sqrt{\eps}s$  indices $i$.}
  \end{equation}

  Assume we are about to embed a tree $P\in \mathcal P$,
  and let $i \in [s]$ be such that~$P\in\mathcal P_i$.
  If~$r(T)\in V(P)$,  we embed $r(T)$ into any vertex of $X_{i,P}$ that is typical towards
  $Y_{i,P}$, and then embed the remaining levels of $P$ into the $P$-slice of either
  $X_{i,P}$ or $Y_{i,P}$, using vertices that are typical with respect to the other set.
   Otherwise $r(P)$ is a linking vertex,
  and its parent $v$ was already embedded to a vertex $\phi(v)$ obeying~\eqref{goooood}.
  Say $\phi(v)\in X_{i'}$ (the case $\phi(v)\in Y_{i'}$ is analogous).
  By~\eqref{mindegR}, there is a cluster $Y_{\ell}$ such
  that $\phi(v)$ is typical to $Y_{\ell, L}$ and  $X_iY_{\ell}\in E(R)$.
  We embed $r(P)$ to a neighbor of $\phi(v)$ in  $Y_{\ell, L}$ that is typical
  to the (at this time) unused part of~$X_{i,P}$, and embed the remaining vertices of $P$ into $X_{i,P}\cup Y_{i,P}$,
  always choosing vertices that are typical to the currently unused
  part of the other set.

  Now assume we are about to embed a seed $v\in S$. Embed $v$ in a neighbor of the image of its parent (this condition is void if $v=r(T)$) and such that~\eqref{goooood} holds. This is possible by Fact~\ref{l:many-typical}.

  Note that by~\eqref{eq:n_links}, the $L$-slices are large enough to accommodate all seeds and
  linking vertices. Also, the $P$-slices are large enough
  to accommodate all other vertices, because of the properties of the partition $\{\mathcal P_1,\ldots,\mathcal P_s\}$.

\section{Packing large balanced trees: Proof of Theorem~\ref{thm:packing_trees}}
\label{sec:thm4}

We will use the following corollary of Theorem~\ref{thm:emb_rooted_tree}.
\begin{corollary}\label{cor:emb_rooted_tree}
  For each $\gamma > 0$, there are $c, 
    n_0 > 0$ such that the following holds
  for every~${n \geq n_0}$. If $G=(A,B,E)$ is a balanced bipartite graph on $2n$
  vertices with~${\delta(G) \geq (\frac{1}{2}+\gamma)n}$, and~$F=(A_F, B_F, E_F)$
  is a 
   balanced forest on at most  $2(1-\gamma)n$ vertices with
  $\Delta(F) \leq cn$, then $F$ embeds in~$G$, with~$A_F\subseteq A$.
\end{corollary}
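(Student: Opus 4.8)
The strategy is to reduce the forest case to the rooted-tree case of Theorem~\ref{thm:emb_rooted_tree} by connecting the components of $F$ into a single balanced rooted tree with controlled maximum degree. Write $F$ as the disjoint union of trees $F_1,\dots,F_k$, and for each $j$ choose a root $r_j$; we may pick $r_j\in A_F$ whenever $F_j$ meets $A_F$ (every tree has vertices in both classes unless it is a single vertex). The plan is to add $k-1$ new edges joining the $F_j$ into one tree $T$: order the components and, for each consecutive pair, add an edge between a vertex of one and a vertex of the next, chosen on opposite sides so that the union stays bipartite with the same two classes $A_F,B_F$. Rooting $T$ at $r_1\in A_F$ then gives a rooted tree.

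The two things to check are that $T$ is \emph{balanced} and that $\Delta(T)$ stays below the degree threshold allowed by Theorem~\ref{thm:emb_rooted_tree}. Balance is not automatic after adding edges, since $F$ being balanced only says $|A_F|=|B_F|$ for the whole forest, not for each component; however, we are free to add new \emph{vertices} as well. Concretely, I would first pad $F$ to a balanced forest on exactly $2(1-\gamma)n$ vertices (adding isolated vertices to the smaller side if $F$ has fewer than $2(1-\gamma)n$ vertices and the two sides are already equal — they are, by hypothesis), then attach each isolated or small component to a previously built part by a single edge so that the running union remains bipartite; since we only ever add pendant edges, and we alternate which side we attach to, the final tree $T$ has $|A_F|=|B_F|=(1-\gamma)n$ vertices, i.e.\ it is balanced on $2(1-\gamma)n$ vertices. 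For the degree bound: each added edge raises the degree of at most two vertices by $1$, and we add at most $|V(F)|-1 < 2n$ edges in total, but crucially we may spread the new edges out so that no vertex receives more than, say, one extra edge per component it is attached to. The cleanest route is to always attach the next component by an edge at a \emph{leaf} of the part already built and at the root $r_j$ of the new component; then each vertex of $T$ has degree at most $\Delta(F)+2\le cn+2$, which is still at most $c'n$ for a slightly smaller constant $c'$, so applying Theorem~\ref{thm:emb_rooted_tree} with parameter $\gamma$ and a constant $c'\le c$ (shrinking $c$ in the statement of the corollary accordingly) embeds $T$ in $G$ with $r(T)=r_1$ sent into $A$.

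Finally, an embedding of $T$ in $G$ restricts to an embedding of $F\subseteq T$ in $G$, and because we rooted $T$ in $A_F$ and added only bipartition-respecting edges, the even levels of $T$ — which contain all of $A_F$ — land in $A$; hence $A_F\subseteq A$, as required. The only mild obstacle is bookkeeping: one must make the padding-and-linking construction precise enough to guarantee simultaneously that (i) the result is a tree (connected, acyclic — clear since we add exactly one edge per new component after the first), (ii) it is properly $2$-colored with the classes $A_F,B_F$ (ensured by choosing the endpoint of each new edge on the correct side), and (iii) the side sizes come out exactly equal (ensured by the preliminary padding, using $|A_F|=|B_F|$ and $|V(F)|\le 2(1-\gamma)n$). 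None of these steps is deep; the work is entirely in setting up the reduction cleanly, after which Theorem~\ref{thm:emb_rooted_tree} does the rest.
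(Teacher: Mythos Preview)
Your approach is essentially the paper's: link the components of $F$ into a single balanced tree and invoke Theorem~\ref{thm:emb_rooted_tree}. The paper's execution is a bit cleaner, though. Rather than linking leaf-to-root in some fixed order (which can get stuck --- e.g.\ if the running tree currently has all its leaves in $A_F$ and the next component is a singleton in $A_F$, your attachment rule fails), it inducts on the number of components and at each step joins a leaf $a\in A_F$ of one component to a leaf $b\in B_F$ of another; the existence of such a pair in \emph{distinct} components follows from $F$ being balanced (otherwise one colour class would have all vertices of degree $\ge 2$, forcing $|E(F)|\ge 2|A_F|=|V(F)|$). This leaf-to-leaf linking yields $\Delta(T)\le\max\{\Delta(F),2\}\le cn$ directly (taking $cn_0\ge 2$), so no shrinking of $c$ is needed, and your padding step can also be dropped since the proof of Theorem~\ref{thm:emb_rooted_tree} works verbatim for balanced trees on at most $2(1-\gamma)n$ vertices.
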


\begin{proof}[Proof of Corollary~\ref{cor:emb_rooted_tree}]
Let $c, 
 n_0 > 0$ be given by Theorem~\ref{thm:emb_rooted_tree} for input $\gamma$. We can assume $cn_0\ge 2$ (otherwise simply choose $n_0$ sufficiently larger). 
We use induction on the number of components of $F$. If $F$ has only one component, we are done by Theorem~\ref{thm:emb_rooted_tree}.

  Otherwise,  $F$ must have two components $C_1$, $C_2$ such that there are two leaves $a\in V(C_1)\cap A_F$, $b\in V(C_2)\cap B_F$. Indeed, if this is not the case then either
  all leaves
  of $F$ are in the same component, which is absurd as all components have leaves, or one of $A_F$, $B_F$ only has vertices of degree at least two, which is impossible as $F$ is a balanced forest.
We add the edge~$ab$ to~$F$. The maximum degree of the resulting forest is $\max\{\Delta(F), 2\} \le cn$, and thus we can apply induction.
   \end{proof}

Corollary~\ref{cor:emb_rooted_tree} enables us to prove the following lemma.

\begin{lemma}\label{lem:packing_forests}
  For each $\gamma > 0$, there are $c, 
    n_0 > 0$ such
  that the following holds for every~$n \geq n_0$ and for
  all functions~$t(n),d(n)$ with~$c\cdot t(n)d(n)\le (\frac 12-\gamma)n$.
  If $\calF=\{F_1,\ldots,F_{t(n)}\}$ is a family of $t(n)$ 
   balanced
  forests, with $F_i = (A_i,B_i,E_i)$, each $F_i$ on at most $2(1-\gamma)n$
  vertices and of maximum degree at most~$c\cdot d(n)$,  
  then~$\calF$ packs into~$K_{n,n}$, with all $A_i$'s embedded in the same part
  of~$K_{n,n}$.
\end{lemma}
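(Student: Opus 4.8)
The plan is to pack the forests $F_1,\dots,F_{t(n)}$ into $K_{n,n}$ one at a time, greedily, using Corollary~\ref{cor:emb_rooted_tree} at each step. Write $K_{n,n}=(A,B,E)$, and suppose at some stage we have already embedded $F_1,\dots,F_{j-1}$ edge-disjointly (with all the $A_i$'s mapped into $A$), and we wish to embed $F_j=(A_j,B_j,E_j)$. Let $H_j$ be the spanning subgraph of $K_{n,n}$ consisting of all edges \emph{not} used by the first $j-1$ embeddings. The key point is to control the minimum degree of $H_j$: since each $F_i$ has maximum degree at most $c\cdot d(n)$, every vertex of $K_{n,n}$ loses at most $c\cdot d(n)$ edges when we embed one forest, so after $j-1\le t(n)$ forests we have $\delta(H_j)\ge n - t(n)\cdot c\,d(n) = n - c\,t(n)d(n)\ge n-(\tfrac12-\gamma)n = (\tfrac12+\gamma)n$, using the hypothesis $c\cdot t(n)d(n)\le(\tfrac12-\gamma)n$. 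Thus $H_j$ is a balanced bipartite graph on $2n$ vertices with $\delta(H_j)\ge(\tfrac12+\gamma)n$, $F_j$ is a balanced forest on at most $2(1-\gamma)n$ vertices with $\Delta(F_j)\le c\,d(n)\le cn$, so Corollary~\ref{cor:emb_rooted_tree} (applied with the same $\gamma$) produces an embedding of $F_j$ into $H_j$ with $A_j$ mapped into $A$. This embedding is automatically edge-disjoint from the previous ones since $H_j$ contains none of the used edges. Iterating from $j=1$ to $t(n)$ completes the packing.

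For the constants: let $c,n_0$ be the outputs of Corollary~\ref{cor:emb_rooted_tree} for input $\gamma$, and enlarge $n_0$ if necessary so that $cn_0\ge 2$ as in that corollary and so that $n_0$ exceeds whatever lower bound on $n$ we need for the degree computation to make sense (any $n\ge n_0$ works since the computation above is monotone). Note the hypothesis $\Delta(F_i)\le c\cdot d(n)$ together with $c\cdot t(n)d(n)\le(\tfrac12-\gamma)n$ is exactly what makes the degree bookkeeping close, so no further conditions on $t(n),d(n)$ are needed.

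There is essentially no serious obstacle here: the lemma is a routine "iterate an almost-spanning embedding theorem while tracking the minimum degree of the leftover graph" argument, and the only thing to be careful about is that Corollary~\ref{cor:emb_rooted_tree} is stated for forests (not just trees) on \emph{at most} $2(1-\gamma)n$ vertices with the bipartition-class side condition $A_F\subseteq A$, which is precisely what we need to maintain the invariant that all roots/parts stay on the $A$-side throughout the packing. The mild subtlety — that $F_j$ need not be balanced-with-exactly-$(1-\gamma)n$-vertices-per-side but only have \emph{at most} that many — is already handled inside Corollary~\ref{cor:emb_rooted_tree}, so we may invoke it as a black box.
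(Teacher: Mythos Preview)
Your proposal is correct and follows essentially the same approach as the paper: iteratively embed the forests one by one via Corollary~\ref{cor:emb_rooted_tree}, using that the leftover graph $G_i$ satisfies $\delta(G_i)\ge n-(i-1)\,c\,d(n)>(\tfrac12+\gamma)n$ at every step. The paper's write-up is slightly terser but the argument is identical.
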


\begin{proof}
  Let~$c, 
    n_0$ be given by Corollary~\ref{cor:emb_rooted_tree}  for
  input~$\gamma$. We can assume that  $\gamma \leq \frac1{10}$ and~$n_0\ge 15$.  Let~$n \geq n_0$ and
  consider a family $\calF=\{F_1,\dots,F_{t(n)}\}$ as in the lemma.
  Denote one of the parts of $K_{n,n}$ by~$A$.
  Set~$G_1=K_{n,n}$. For $i=1,\ldots,t(n)$,
  we will find an embedding~$\phi$ of~$F_i=(A_i, B_i, E_i)$ into~$G_i$,
  with all vertices of $A_i$ embedded in~$A$, and then set~$G_{i+1}=G_i-\phi(E_i)$.
  This is possible by Theorem~\ref{thm:emb_rooted_tree},
  as for each $i \le t(n)$, we have
  $
   \delta(G_i) \geq n- (i-1)\,c\cdot d(n) > n- c\cdot t(n)d(n)
   >  (\frac12+\gamma)n.
  $
\end{proof}

Now we are ready to prove Theorem~\ref{thm:packing_trees}.

\begin{proof}[Proof of Theorem~\ref{thm:packing_trees}]
  Fix~$c,\, 
   n_0>0$ given by Lemma~\ref{lem:packing_forests}
  for input~$\gamma$. We may  assume that~\mbox{$c, \gamma \leq \frac 12$} and $n_0\ge (\frac8{c\gamma})^{2/\gamma}$.
  For~$n \geq n_0$, consider any family $\{T_1,\dots,T_t\}$
  of~$t \le n^{\frac12-\gamma}$ balanced rooted trees,
  each on at most~$2(1-\gamma)n$ vertices.
  Let~$A, B$ be the color classes of~$K_{n,n}$ and for $i=1,\ldots,t$
  let $A_i, B_i$ be the color class of $T_i$, with $A_i$ containing its root $r(T_i)$.
   Let $A'\subseteq A$ and $B'\subseteq B$ be sets of size
  $n':=\lfloor\gamma n\rfloor$.

    Let $H^A_i$ be the set of the~$\lfloor\frac{8\sqrt n}c\rfloor$ vertices of highest degrees in $A_i$, for $i=1,\ldots, t$. Define~$H^B_i$ analogously, and set $H_i:=H^A_i\cup H^B_i$. Since
  each tree $T_i$ has less than $2n$ edges, we know that each vertex in the balanced forest $F_i:=T_i - H_i$ has degree at most~$\frac{c\sqrt n}2\le c\sqrt{n-n'}$.
  Also,~$|V(F_i)|\le 2(1-\gamma)(n-n')$.
  We use Lemma~\ref{lem:packing_forests}, with~$t(n)=t$ and~$d(n)=\sqrt n$,
  to pack the forests~$F_1,\ldots, F_t$ into $K_{n,n} - (A'\cup B')$,
  with all the $A_i$'s embedded in~$A$.

We embed~$H^A=\bigcup_{1\le i\le t}H^A_i$ into
  $A'$, and $H^B=\bigcup_{1\le i\le t}H^B_i$  into~$B'$, which is possible as
  $$
    |H^A| =|H^B|\ < \ t\cdot \frac{8\sqrt n}c \ \leq \ \lfloor\gamma n\rfloor \ = \ |A'| \ = \ |B'|,
  $$
  by our choice of~$n_0$. This finishes the packing.
\end{proof}

\section{Final Remarks}
\label{sec:final}

\subsection{Open questions for tree-packing into bipartite graphs}\label{final:treepackconj}
As discussed in the introduction, if we wish to decompose a complete bipartite graph $K_{m,m}$ with copies of a tree $T_{n,n}$, then $m$ needs to  be larger than $n$, as in the following conjecture, which follows from  Conjecture~\ref{conj:ringel_bipartite}.

\begin{conjecture}
  \label{conj4}
  Any tree $T_{n,n}$ decomposes $K_{2n-1,2n-1}$.
\end{conjecture}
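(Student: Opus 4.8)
The plan is to deduce Conjecture~\ref{conj4} from Conjecture~\ref{conj:ringel_bipartite} by a simple doubling trick, entirely analogous to the reduction sketched after Theorem~\ref{thm:packing_trees}. Start with a balanced tree $T:=T_{n,n}$ on $2n$ vertices, with bipartition classes $A_T,B_T$ of size $n$ each. The first step is to produce from $T$ an (unbalanced) tree $T'$ on $4n-1$ vertices by taking two disjoint copies $T^{(1)},T^{(2)}$ of $T$, picking a leaf $u$ of $T^{(1)}$ in $A_T$ and a leaf $v$ of $T^{(2)}$ in $B_T$, and joining them by an edge $uv$; alternatively, identify the two vertices $u$ and $v$ into a single new vertex. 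Either way $T'$ is a tree of order $4n-1$ whose two colour classes have sizes $2n-1$ and $2n$ (after identification, sizes $2n-1$ and $2n-1$; the exact bookkeeping of which variant to use is the only thing to pin down). The point is that $T'$ is still a tree of the order to which Conjecture~\ref{conj:ringel_bipartite} (or its natural unbalanced analogue) applies.

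The second step is to invoke Conjecture~\ref{conj:ringel_bipartite} on $T'$: it decomposes $K_{2n-1,2n-1}$ (note $|E(T')| = 4n-2 = \tfrac{2}{2n-1}\binom{2n-1}{2}\cdot\ldots$; one checks $(2n-1)^2 = (4n-2)(2n-1)/2$, so the edge count divides $e(K_{2n-1,2n-1})$ correctly, which is consistent with the doubled structure). Concretely, $K_{2n-1,2n-1}$ has $(2n-1)^2$ edges and $T'$ has $4n-2$ edges, and indeed $(2n-1)^2/(4n-2) = (2n-1)/2$, so the number of copies in a decomposition is $(2n-1)/2$ — which is only an integer for odd $n$. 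This signals that one must instead work with the balanced variant: identify $u,v$ so that $T'$ has order $4n-2$, giving $|E(T')| = 4n-3$, still not obviously dividing $(2n-1)^2$. The cleanest route is therefore to view Conjecture~\ref{conj:ringel_bipartite} as providing, for the tree $T$ on $2n$ vertices itself, a decomposition of $K_{2n-1,2n-1}$ directly — but $T$ is not on $2n-1+1$ vertices, so this needs the doubled tree after all.

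Given these divisibility subtleties, the honest plan is: (i) form the doubled tree $T'$ of order $4n-1$ as above (join by an edge, no identification), so $T'$ has $4n-2$ edges and colour classes of sizes $2n-1$ and $2n$; (ii) apply Conjecture~\ref{conj:ringel_bipartite} in the form "any tree of order $m+1$ decomposes $K_{m,m}$" with $m=4n-2$ is wrong for the host size we want, so instead one should apply it to get a decomposition of $K_{2n-1,2n-1}$ of $T$ by observing that a $T'$-decomposition of a suitable complete bipartite graph, restricted appropriately, yields copies of $T$; (iii) since each copy of $T'$ contains two edge-disjoint copies of $T$, a decomposition of $K_{2(2n-1)/2,\ldots}$ into copies of $T'$ translates into a decomposition into copies of $T$. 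The main obstacle — and the reason this is only stated as a conjecture following from Conjecture~\ref{conj:ringel_bipartite} rather than proved outright — is matching up the divisibility and the host-graph dimensions: Conjecture~\ref{conj:ringel_bipartite} gives $K_{n,n}$ for trees of order $n+1$, and squeezing out a decomposition of $K_{2n-1,2n-1}$ by $T_{n,n}$ requires the doubled tree to have exactly the right order and edge count, which forces a careful choice between the "add an edge" and "identify a vertex" constructions and possibly a parity case split on $n$. Once the right doubled tree is chosen so that its order is $2(2n-1)$ and it has $2(2n-1)^2/(2n-1) = 2(2n-1)$... — in short, the arithmetic reconciliation is the crux, and the structural part (each $T'$-copy splits into two edge-disjoint $T$-copies, and the colour classes line up) is routine.
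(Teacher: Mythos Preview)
Your approach has a genuine gap: you have badly overcomplicated what is in fact a one-line substitution. Conjecture~\ref{conj:ringel_bipartite} asserts that \emph{any} tree of order $m+1$ decomposes $K_{m,m}$; it does not require the tree to be unbalanced. A balanced tree $T_{n,n}$ is simply a tree of order $2n$, so taking $m = 2n-1$ in Conjecture~\ref{conj:ringel_bipartite} immediately yields that $T_{n,n}$ decomposes $K_{2n-1,2n-1}$. That is the entire deduction the paper has in mind.

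The doubling trick you invoke is lifted from the remark after Theorem~\ref{thm:packing_trees}, but there it runs in the \emph{opposite} direction: one starts with an arbitrary (possibly unbalanced) tree and manufactures a balanced one so as to apply a result that only handles balanced trees. Here you already have a balanced tree and wish to apply a conjecture that handles arbitrary trees, so no conversion is needed. All of the divisibility trouble you ran into --- $(2n-1)/2$ not being an integer, the mismatch between ``add an edge'' and ``identify a vertex'', the parity case split --- are artefacts of this unnecessary detour; they vanish once you apply Conjecture~\ref{conj:ringel_bipartite} directly. For the record, the correct edge count check is $e(K_{2n-1,2n-1}) = (2n-1)^2$ and $e(T_{n,n}) = 2n-1$, so the number of copies is $2n-1$, an integer for every $n$.
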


However, it may be possible that the  host graph can be somewhat smaller. We propose the following question.
\begin{question}
  \label{q1}
 What is the smallest $k$ such that every tree $T_{n,n}$ decomposes $K_{2n-1,k}$.
\end{question}

Clearly, $k$ needs to be at least $n$. Letting $D_{n,n}$ be the double-star with $n$ vertices in either partition class, one can prove by induction on $n$ that $K_{2n-1,n}$ decomposes into copies of  $D_{n,n}$. So one could think that $k=n$.
However, this is false for~$n = 3$ and paths. Indeed,
let $A$ and $B$ be the parts of $K_{5,3}$ such that $|A| = 5$, and
$|B| = 3$. Note that we cannot embed two vertices of degree two in a
vertex of $A$. Since each
path on six vertices has two vertices of degree two in both sides of
the bipartition, any decomposition of
 $K_{5,3}$ into three 6-vertex paths must
embed two vertices of degree two in the same vertex in $A$.
One can generalize this example for all $n\ge 3$.

Paths are not the only trees that force us to assume $k>n$ in Question~\ref{q1}. For instance, assume $n=2\ell+1$ is odd and
consider the tree $T$ that arises from taking two copies of $D_{\ell+1, \ell+1}$, deleting a leaf from each and
connecting the neighbors of these leaves by an edge. Each side of  $T$ contains two vertices of degree $\ell+1$.
So $T$ cannot decompose  $K_{4\ell+1, 2\ell+1}=K_{2n-1,n}$,
because the larger side would receive two of the high degree vertices,
which is impossible.

In a similar spirit as Question~\ref{q1}, we ask the following.

\begin{question}
  \label{q2}
  What is the smallest $k$ such that any family of $k$ balanced trees,
  each with $n$ vertices in either partition class, packs into $K_{2n-1,k}$\,?
\end{question}

Generalizing this even more, one could ask for the smallest $k$ such that
any family of balanced trees, each with at most $n$ vertices in either partition class, and with a total number of edges not exceeding $(2n-1)k$, packs into $K_{2n-1,k}$.

\subsection{An exact bipartite KSS theorem?}\label{final:KSS}
In
 Theorem~\ref{thm:emb_rooted_tree}
we have a $\gamma n$ slack on each side of the bipartition of the host graph.
 However, we believe the following direct analogue of
Theorem~\ref{thm:KSS} should be true in the bipartite setting.

\begin{conjecture}
  \label{conj:emb_rooted_tree}
  For each $\gamma > 0$, there are $c, n_0 > 0$ such that the following
  holds for every~${n \geq n_0}$. If $G=(A,B,E)$ is a balanced bipartite
  graph on $2n$ vertices with~${\delta(G) \geq (\frac{1}{2}+\gamma)n}$,
  and $T$ is a balanced rooted tree on $2n$ vertices
  with $\Delta(T) \leq \frac{cn}{\log n}$, then~$T$ embeds in~$G$ with
  the root of~$T$ embedded in~$A$.
\end{conjecture}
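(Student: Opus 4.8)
One natural line of attack is to follow the proof of Theorem~\ref{thm:KSS}, transported to the bipartite setting, and to replace the wasteful features of the proof of Theorem~\ref{thm:emb_rooted_tree} --- where the $\gamma n$ slack on each side of the bipartition made a naive greedy embedding, with room to spare in every cluster, suffice --- by the heavier machinery needed to embed a genuinely spanning tree. First I would apply the Bipartite Regularity Lemma (Lemma~\ref{lem:regularity}) to $G$, with $1/n \ll \eps \ll d \ll \gamma$, obtaining a reduced graph $R$ on $2s$ vertices with $\delta(R) \geq (\tfrac12 + \tfrac{\gamma}{2})s$. Since $R$ is bipartite with both sides of size $s$ and $\delta(R) > s/2$, the Moon--Moser theorem yields a Hamilton cycle $C$ in $R$. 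I would then reserve a few clusters for an absorbing structure, clean up the remaining partition so that consecutive cluster pairs along $C$ become $(\eps',d')$-super-regular, and leave the $\Oh(\eps n)$ deleted or exceptional vertices of $G$ to be handled by the absorbing structure (they are few, and each has linearly many neighbours in $G$). Because $T$ and $G$ are both balanced and the clusters have equal sizes, the vertex ``budget'' of every cluster pair is essentially prescribed; the crux is that these budgets must now be met \emph{exactly}.

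Second, I would split $T$, in the spirit of Proposition~\ref{prop:split_trees}, into a bounded family $\calP$ of small subtrees hanging off a bounded seed set $S$, together with a small ``flexible'' part reserved for absorption, and distribute the members of $\calP$ among the cluster pairs, in the cyclic order given by $C$, by a refinement of Lemma~\ref{lemma:camila}. The point where one must go beyond Section~\ref{sec:embed} is that this distribution has to match the two colour classes of the subtrees assigned to each pair against $(|X_i|,|Y_i|)$ up to only a tiny error that the absorbing part can later repair, rather than up to a constant factor; keeping this colour-balance under control cluster by cluster is already delicate. Seeds would be embedded in BFS order into clusters adjacent along $C$ to the images of their parents, using regularity to keep each image typical to the clusters it will need, exactly as in~\eqref{goooood}.

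Third --- and this is the main obstacle --- I would embed each $P\in\calP$ greedily in BFS order inside its assigned super-regular pair; here, in contrast to Theorem~\ref{thm:emb_rooted_tree}, a vertex of $T$ may have up to $\tfrac{cn}{\log n}$ children. If such a vertex $v$ has image $\phi(v)\in X_i$, super-regularity guarantees that $\phi(v)$ has at least $d'|Y_i|$ neighbours in $Y_i$, but once the pair is nearly full only few of them remain unused, so a plain greedy step breaks down for the last high-degree vertices; this is precisely the obstruction that forces the $\log n$ in the degree bound. One must order the embedding so that each high-degree vertex is placed while its target clusters are still far from full, and argue --- by a counting argument that is affordable exactly because the degree bound carries the $\log n$ denominator --- that at each step \emph{most} candidate images still have enough free neighbours. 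This is the heart of the Koml\'os--S\'ark\"ozy--Szemer\'edi argument, and carrying it out with the added constraints of bipartiteness and exact colour-balance is where the genuine work lies. Once all but $\oh(n)$ vertices of $G$ are covered, the reserved absorbing structure would be used to complete the embedding to a spanning copy of $T$, with the root of $T$ embedded in $A$.
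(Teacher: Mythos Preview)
This statement is a \emph{conjecture} in the paper; there is no proof to compare against. The authors explicitly say in Section~\ref{final:KSS} that they were unable to adapt known strategies, and they identify two concrete obstacles --- both of which your sketch runs into without resolving.

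First, your ``small flexible part reserved for absorption'' is exactly the device the paper flags as problematic. The Kathapurkar--Montgomery absorption scheme requires splitting $T$ into a small absorbing subtree and a large subtree to be embedded approximately, but a balanced tree cannot in general be split into two \emph{balanced} subtrees of prescribed sizes; the paper's own example (the tree built from $\lceil\alpha\log n\rceil$ near-equal stars joined at a centre, doubled and linked) witnesses this. You do not say how the flexible part is obtained or how its inevitable colour imbalance is reconciled with the balanced approximate embedding of the rest; the paper's view is that an \emph{unbalanced} variant of Theorem~\ref{thm:emb_rooted_tree} would be needed first, and you do not supply one. Second, the ``refinement of Lemma~\ref{lemma:camila}'' that matches the two colour classes assigned to each cluster pair against $(|X_i|,|Y_i|)$ up to an absorbable error is asserted, not provided: Lemma~\ref{lemma:camila} only yields slack bounds of the form $(1-7\mu)m$, and tightening these to near-equalities cluster by cluster under the bipartite parity constraint is precisely the difficulty the authors could not overcome (in the KSS framework, the analogous step is distributing leaves into a star cover of the reduced graph). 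Your third paragraph itself concedes that the high-degree embedding step ``is where the genuine work lies''; the proposal is an accurate map of the obstacles rather than a route through them.
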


Note that   Conjecture~\ref{conj:emb_rooted_tree}  uses the same
bound $\Delta(T)\le cn/\log n$ as in Theorem~\ref{thm:KSS} (and not $cn$ as in
Theorem~\ref{thm:emb_rooted_tree}). This is necessary, as can be seen by considering the following adaptation of an example  from~\cite{KSS01-spanning_trees}. Consider $\lceil \alpha\log n\rceil$ stars whose sizes differ by at most one, where  $\alpha$ is  a constant.  Join a new vertex $r$ to each of the centers of the stars. Take two copies of the obtained tree and obtain a balanced tree $T$ by joining the two copies of $r$. The random balanced bipartite graph
with $p=0.9$ has w.h.p.\ minimum degree at least $0.8n$ but w.h.p.\
it does not contain~$T$ as a subgraph.

We were not able to  adapt previous
strategies for a proof of Conjecture~\ref{conj:emb_rooted_tree}. In
Koml\'os, S\'ark\"ozy, and Szemer\'edi's~\cite{KSS01-spanning_trees}
strategy, we could not find a way to distribute the leaves of the tree
into a star cover of the reduced graph.
An alternative route might be to use the absorption method,
as used, for example, by Kathapurkar and
Montgomery~\cite{KM22-spanning_trees}. Their strategy consists of splitting the tree $T$ into two subtrees one of which is small and serves for the absorption at the very end, while the other is large and is embedded with an approximate embedding result. As it is not possible to split any balanced tree into two balanced subtrees of the adequate sizes (for instance, for the tree from the previous paragraph this is not possible), we
believe that in order to use the strategy from~\cite{KM22-spanning_trees}, it would first be necessary to prove an unbalanced
variant of Theorem~\ref{thm:emb_rooted_tree}.

\section*{Acknowledgments}

The first author would like to thank Orlando Lee,
for pointing out Conjecture~\ref{conj:tpc_balanced},
and César Hernández-Vélez and José Coelho de Pina Jr.\ for
earlier discussions on Conjecture~\ref{conj:tpc_balanced}.


\bibliographystyle{abbrv}
\bibliography{packingTrees}

\end{document}